\newcommand{\stkout}[1]{\ifmmode\text{\sout{\ensuremath{#1}}}\else\sout{#1}\fi} 
\DeclareMathOperator{\Tr}{Tr}
\DeclareMathOperator{\Ric}{Ric}
\DeclareMathOperator{\Hess}{Hess}
\begin{document}

\numberwithin{equation}{section}
\newtheorem{theorem}{Theorem}[section]
\newtheorem*{theorem*}{Theorem}
\newtheorem{theoremstar}[theorem]{Theorem*}
\newtheorem{conjecture}[theorem]{Conjecture}
\newtheorem*{conjecture*}{Conjecture}
\newtheorem{proposition}[theorem]{Proposition}
\newtheorem*{proposition*}{Proposition}
\newtheorem*{``proposition"*}{``Proposition"}
\newtheorem{question}{Question}
\newtheorem{lemma}[theorem]{Lemma}
\newtheorem*{lemma*}{Lemma}
\newtheorem{cor}[theorem]{Corollary}
\newtheorem*{obs*}{Observation}
\newtheorem{obs}{Observation}
\newtheorem{example}[theorem]{Example}
\newtheorem{condition}{Condition}
\newtheorem{definition}[theorem]{Definition}
\newtheorem*{definition*}{Definition}
\newtheorem{proc}[theorem]{Procedure}
\newtheorem{problem}{Problem}
\newtheorem{remark}{Remark}
\newcommand{\comments}[1]{} 
\def\Z{\mathbb Z}
\def\Za{\mathbb Z^\ast}
\def\Fq{{\mathbb F}_q}
\def\R{\mathbb R}
\def\N{\mathbb N}
\def\C{\mathbb C}
\def\k{\kappa}
\def\grad{\nabla}
\def\M{\mathcal{M}}
\def\S{\mathcal{S}}
\def\pt{\partial}

\newcommand{\todo}[1]{\textbf{\textcolor{red}{[To Do: #1]}}}
\newcommand{\note}[1]{\textbf{\textcolor{blue}{#1}}}

\title[]{A Priori Log-Concavity Estimates for Dirichlet Eigenfunctions}
 \author[Iowa State University]{Gabriel Khan} 
 \address[Gabriel Khan]{Department of Mathematics, Iowa State University, Ames, IA, USA.}
  \email{gkhan@iastate.edu}  

  \author[University of Strasbourg]{Soumyajit Saha} 
\address[Soumyajit Saha]{IRMA, University of Strasbourg, Strasbourg, France.}  \email{soumyajit.saha@unistra.fr}

   \author[UCSB]{Malik Tuerkoen}
   \address[Malik Tuerkoen]{Department of Mathematics, University of California,  Santa Barbara, CA, USA.}
  \email{mmtuerkoen@ucsb.edu}

\date{\today}

\allowdisplaybreaks

\begin{abstract}
In this paper, we establish a priori log-concavity estimates for the first Dirichlet eigenfunction of convex domains of a Riemannian manifold. Specifically, we focus on cases where the principal eigenfunction $u$ is assumed to be log-concave and our primary goal is to obtain quantitative estimates for the Hessian of $\log u$.
\end{abstract}

\maketitle 

\section{Introduction}

Consider a bounded, convex domain $\Omega$ within a Riemannian manifold $(M, g)$. We study the properties of the Dirichlet groundstate eigenfunction of the Laplace-Beltrami operator. More precisely, we consider positive $L^\infty$-normalized solutions to the problem
\begin{equation}\label{eqn: Eigenfunction equation}
   \begin{cases}
       -\Delta u  = \lambda u \quad &\textup{in }\Omega,\\
       u = 0 \quad &\textup{on }\partial \Omega.
   \end{cases}
\end{equation}

For convex domains in $\mathbb{R}^n$ \cite{brascamp1976extensions}, $\mathbb{S}^n$ \cite{lee1987estimate}, and $C^4$-small deformations of $\mathbb{S}^2$ \cite{surfacepaper1,khan2023modulus}, the principal eigenfunction $u_1$ will be log-concave (i.e., $v = \log u_1$ is concave). A large amount of work has been done studying the log-concavity (or lack thereof) for the principal eigenfunction for various classes of domains and geometries \cite{singer1985estimate, wang2000estimation,  bourni2022vanishing, 10.4310/jdg/1559786428,Ishige2022,khan2022negative}. It is well known that the log-concavity of the ground state implies lower bounds on the spectral gap (or {fundamental gap})—the difference between the two lowest eigenvalues of the Laplace operator with Dirichlet bondary conditions. Most notably, Andrews and Clutterbuck \cite{andrews2011proof} established a strong form of log-concavity in order to prove the fundamental gap conjecture. 

In this paper, we focus on cases where the principal eigenfunction is log-concave, aiming to derive stronger estimates for its Hessian. This approach is roughly analogous to establishing a priori estimates for solutions to Monge-Amp\`ere equations or other fully-nonlinear degenerate elliptic equations. In that setting, one assumes that a convex solution exists and attempts to derive a $C^2$ (or similar) estimate for it, known as an \emph{a priori estimate}. For this reason, we can interpret the results in this paper as providing \emph{a priori log-concavity estimates}. In the setting of Monge-Amp\`ere equations, the a priori estimate can subsequently be combined with a continuity argument to establish the existence for the original problem. Similarly, the ultimate goal for our work is to establish the log-convexity of the principal eigenfunction for a broader class of geometries.

\subsection{Main results}
\begin{theorem}{} \label{Log-concavity-estimate-sphere}
Suppose that $(M^n,g)$ is a Riemannian manifold with bounded 
sectional curvature $\overline \kappa \geq \kappa \geq \underline \kappa.$ 
 Then for any uniformly convex $C^2$ domain $\Omega \subset M$, if the positive solution of the problem \eqref{eqn: Eigenfunction equation} is log-concave, then $v = \log u$ satisfies
\begin{equation}\label{Main inequality}
    \nabla^2 v + \left(\alpha | \nabla \sqrt u  |^2 + C v -d\right) g < 0,
\end{equation}
where $\nabla^2 v$ denotes the Hessian tensor and $\alpha$,  $C$ and $d \geq 0$ are explicit constants depending on the principal eigenvalue of \eqref{eqn: Eigenfunction equation}, $\underline \kappa$, $\overline \kappa$, $|\nabla \Ric|$ and a lower bound on the second fundamental form of the boundary of $\Omega$.   
\end{theorem}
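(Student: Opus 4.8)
The plan is to run a maximum-principle argument for the symmetric $2$-tensor
\[
  Z \;:=\; \nabla^2 v \;+\; \big(\alpha|\nabla\sqrt u|^2 + Cv - d\big)\,g ,
\]
showing that its largest eigenvalue is negative throughout $\Omega$, with the constants $\alpha$, $C$, $d\ge 0$ fixed in the course of the argument. First I would set up the elementary reformulation: with $v=\log u$, equation \eqref{eqn: Eigenfunction equation} becomes $\Delta v + |\nabla v|^2 + \lambda = 0$, the log-concavity hypothesis reads $H:=\nabla^2 v\le 0$, and $|\nabla\sqrt u|^2 = \tfrac{|\nabla u|^2}{4u} = \tfrac u4|\nabla v|^2$. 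The natural operator here is the weighted Laplacian $L := \Delta + 2\langle\nabla v,\nabla(\cdot)\rangle = u^{-2}\operatorname{div}(u^2\nabla(\cdot))$, and commuting derivatives twice in $\Delta v + |\nabla v|^2 = -\lambda$ together with the Bochner identity for the Hessian yields
\[
  LH_{ij} = -2(H^2)_{ij} + 2R_{ikjl}\nabla^k v\,\nabla^l v + 2R_{ikjl}H^{kl} - \Ric_{ik}H^k_{\,j} - \Ric_{jk}H^k_{\,i} + \big(\nabla_i\Ric_{jl}+\nabla_j\Ric_{il}-\nabla_l\Ric_{ij}\big)\nabla^l v ,
\]
while $L u = u(2|\nabla v|^2-\lambda)$, $L|\nabla v|^2 = 2|H|^2 + 2\Ric(\nabla v,\nabla v)$, $L v = |\nabla v|^2-\lambda$, and hence
\[
  L\big(|\nabla\sqrt u|^2\big) = \frac{|\nabla^2 u|^2}{2u} + \frac u2\,\Ric(\nabla v,\nabla v) - \frac{\lambda u}{4}|\nabla v|^2 ,
\]
whose leading term is manifestly nonnegative.

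Next I would analyze the boundary behaviour, which is where uniform convexity is used. Since $v\to-\infty$ and $H$ blows up on $\partial\Omega$, the tensor $Z$ does not extend continuously, so the point is to show its top eigenvalue tends to $-\infty$ as $x\to\partial\Omega$ and in fact stays $\le -\varepsilon_0<0$ on a collar $\{0<t<\delta_0\}$, where $t$ denotes distance to $\partial\Omega$. Working near $\partial\Omega$ — in Fermi coordinates, or via one-dimensional barriers built from the distance function in the spirit of Andrews–Clutterbuck — one has $u\sim h\,t$ with $h=|\nabla u|$ on $\partial\Omega$ bounded above by an a priori gradient estimate depending on $\lambda$ and $\underline\kappa$; hence $\nabla^2 v\sim -t^{-2}$ in the normal direction, $\nabla^2 v(e,e)\sim -t^{-1}\,\mathrm{II}(e,e)$ in a tangential direction $e$, so that $\mu_{\max}(\nabla^2 v)\sim -t^{-1}\,\mathrm{II}_{\min}$, while $|\nabla\sqrt u|^2\sim \tfrac{h}{4t}$ and $Cv=O(\log t)$. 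The largest eigenvalue of $Z$ therefore behaves like $-t^{-1}\big(\mathrm{II}_{\min}-\tfrac\alpha4 h\big)\to-\infty$ as soon as $\alpha$ is chosen small relative to $\mathrm{II}_{\min}$ and the gradient bound. This is exactly why the estimate carries the $|\nabla\sqrt u|^2$ term: a bounded quantity cannot absorb the $t^{-1}$ blow-up of the tangential Hessian.

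With the collar under control, it remains to treat the bulk. On the region where $\alpha|\nabla\sqrt u|^2 + Cv - d < 0$ the desired inequality $\mu_{\max}(Z)=\mu_{\max}(\nabla^2 v)+(\alpha|\nabla\sqrt u|^2+Cv-d)<0$ is immediate from log-concavity, so suppose $\mu_{\max}(Z)$ attains a nonnegative supremum over $\Omega$; by the previous paragraph it is attained at an interior point $x_0$ lying in the bulk $\{t\ge\delta_0\}$, in a direction $e_0$ that is a top eigenvector of $H$. Extending $e_0$ by parallel transport along geodesics, so that $\nabla E(x_0)=0$ and $\nabla^2E(x_0)=0$, the scalar function $f:=Z_{ij}E^iE^j$ has an interior maximum at $x_0$, whence $0\ge Lf(x_0) = (LH)(e_0,e_0) + L\big(\alpha|\nabla\sqrt u|^2+Cv-d\big)(x_0)$. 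Substituting the identities above, the reaction term is $-2(H^2)(e_0,e_0)=-2\mu_{\max}(H)^2\le 0$; the genuine curvature, $\nabla\Ric$, and remainder contributions are bounded in modulus by $c_0\big(1+|\nabla v|^2\big)+c_1|\mu_{\max}(H)|$ with $c_0,c_1$ depending only on $\underline\kappa$, $\overline\kappa$, $|\nabla\Ric|$, $\lambda$, $n$, $\alpha$ (here one uses $\operatorname{tr}H=\Delta v=-|\nabla v|^2-\lambda$ and $H\le 0$ to bound $|H|$, and retains the nonnegative term $\tfrac{|\nabla^2 u|^2}{2u}$); and $L(Cv)=C(|\nabla v|^2-\lambda)$ supplies an arbitrarily large positive multiple of $|\nabla v|^2$. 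The delicate point — and the step I expect to be the main obstacle — is to turn $0\ge Lf(x_0)$ into a contradiction: one first fixes $C$ large enough to dominate the curvature terms, and then uses that at $x_0$ the relations $\mu_{\max}(H)(x_0)\ge-\big(\alpha|\nabla\sqrt u|^2+Cv-d\big)(x_0)\ge 0$, together with the a priori lower bound for $u$ on $\{t\ge\delta_0\}$, confine $|\nabla v|^2(x_0)$ and hence $\alpha|\nabla\sqrt u|^2(x_0)=\tfrac\alpha4 u|\nabla v|^2(x_0)$ to a controlled range; choosing $d$ large then forces $\alpha|\nabla\sqrt u|^2+Cv-d<0$ at $x_0$, so $\mu_{\max}(Z)(x_0)<0$, contradicting the standing assumption and establishing \eqref{Main inequality}. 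Throughout this last step the hypothesis $\nabla^2 v\le 0$ is used essentially — it gives $-2(H^2)$ a usable sign, bounds $|H|$ by $|\operatorname{tr}H|$, and renders the cross terms in $L|\nabla v|^2$ and $L|\nabla\sqrt u|^2$ favorable — so the estimate cannot be closed without it.
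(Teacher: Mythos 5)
Your overall strategy---a maximum principle for the tensor $Z = \nabla^2 v + bg$ with $b = \alpha|\nabla\sqrt u|^2 + Cv - d$, a boundary analysis showing $\mu_{\max}(Z)\to-\infty$, and an interior analysis at a max point---is the same architecture as the paper's. You also correctly compute $L|\nabla\sqrt u|^2 = \tfrac{|\nabla^2 u|^2}{2u} + \tfrac u2\Ric(\nabla v,\nabla v) - \tfrac{\lambda u}{4}|\nabla v|^2$, which is precisely the ``completing the square'' identity \eqref{Completing the square} that the paper flags as the crucial step. However, the argument has two genuine gaps where the paper does essential work that you wave past.

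First, the reaction term. At the interior max you have $0 \ge Lf(x_0)$ and the ``reaction'' contribution is $-2(H^2)(e_0,e_0) = -2\mu_{\max}(H)^2 = -2b(x_0)^2$, a \emph{negative} term that grows quadratically in $b$. You list it as $\le 0$ and then lump curvature and remainder contributions into a bound $c_0(1+|\nabla v|^2) + c_1|\mu_{\max}(H)|$, which is linear in $|\mu_{\max}(H)|$ --- but the reaction term is quadratic and must actually be absorbed, not merely bounded. The paper does this by expanding $-2b^2 = -2\alpha^2|\nabla w|^4 - 2(Cv-d)\left(2\alpha|\nabla w|^2 + Cv - d\right)$ and using log-concavity (at the touching point $v_{11}=-b\le 0$, hence $b\ge 0$, hence $2\alpha|\nabla w|^2 \ge -2(Cv-d)$) to get $-2b^2 \ge -2\alpha^2|\nabla w|^4 + 2(Cv-d)^2$, see \eqref{ineq: square-term}. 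The quartic piece $-2\alpha^2|\nabla w|^4$ is then absorbed into the gradient group (leading to the quadratic constraint \eqref{Interior bound on alpha 3} on $\alpha$), and the piece $+2(Cv-d)^2$ turns into the quadratic constraint \eqref{condition on d} on $d$. None of this absorption is visible in your argument, and without it the contradiction does not close.

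Second, the choice of $d$. You derive the contradiction by observing that at the max point $x_0$ in the collar-complement $\{t\ge\delta_0\}$, $u$ has a positive lower bound, hence $|\nabla v|^2$ and $\alpha|\nabla\sqrt u|^2$ are controlled, and one can take $d$ large enough to force $b(x_0) < 0$. This makes $d$ depend on $\delta_0$ and on $\inf_{\{t\ge\delta_0\}} u$, which are domain-dependent quantities \emph{not} among the claimed constants ($\lambda$, $\underline\kappa$, $\overline\kappa$, $|\nabla\Ric|$, a lower bound on $\mathrm{II}$). Taken to its conclusion, this choice simply makes $b<0$ on the bulk, which trivializes the statement there and does not yield the explicit dependence asserted. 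The paper avoids this entirely: after the absorption above, the constants emerge from solving the two quadratic inequalities \eqref{Interior bound on alpha 3}--\eqref{condition on d}, using Ling's gradient bound \eqref{Gradient bound from Ling} and $\sup u = 1$, with no reference to a bulk lower bound on $u$. (Minor: your displayed chain $\mu_{\max}(H)(x_0)\ge -b(x_0)\ge 0$ is reversed --- the correct deduction from $\mu_{\max}(Z)(x_0)\ge 0$ and $H\le 0$ is $0\ge\mu_{\max}(H)(x_0)\ge -b(x_0)$, i.e. $b(x_0)\ge 0$.)

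One further structural difference worth noting: because both $\nabla^2 v(X,X)$ and $b$ blow up at $\partial\Omega$, the paper's maximum principle is formulated via a \emph{barrier criteria} that applies only when the maximum equals zero, and it needs a continuity deformation $b(x,t) = t|\nabla w|^2 + Cv - d$ with $t$ running from $0$ to $\alpha$ (Proposition \ref{mainpropposition} and the proof of Theorem \ref{Log-concavity-estimate-sphere}) to ensure the touching actually happens in this controlled way. Your direct-sup formulation could in principle substitute, but only if the reaction term is handled correctly at a max point where $\mu_{\max}(Z)\ge 0$ is strictly positive --- there $v_{11}\ne -b$, so the identity $-2v_{11}^2 = -2b^2$ no longer holds, and one more layer of care is required that your sketch does not supply.
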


Note that \eqref{Main inequality} is not scale-invariant, which is why we only consider $L^\infty$-normalized eigenfunctions (i.e., eigenfunctions satisfying $\sup u = 1$).
There are several special cases where we can solve the constants in closed form and that we know that the principal eigenfunction is log-concave.\footnote{In the following, we use the notation $\nabla^2 v + b < 0$ with $b$ a function to indicate that the two-tensor $\nabla^2 v + b g$ is negative definite.}

\begin{cor} \label{Spherical version of Bochner theorem}
     Suppose that $\Omega \subset \mathbb{S}^n$ is a bounded domain whose boundary satisfies $\textrm{II} (\partial \Omega) > \frac{1}{3}$, where $\textrm{II} (\partial \Omega)$ is the second fundamental form of the boundary of $\Omega$. Then the principal eigenfunction of the 
 Dirichlet Laplacian satisfies the estimate
     \begin{equation*}
         \nabla^2 v + \frac{1}{\lambda} \left( \sqrt{ (\lambda+2)^2 + 8 \lambda} - (\lambda+2) \right)|\nabla \sqrt u|^2+v<0.
     \end{equation*}    
\end{cor}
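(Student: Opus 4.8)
The plan is to obtain Corollary~\ref{Spherical version of Bochner theorem} as the specialization of Theorem~\ref{Log-concavity-estimate-sphere} to the round sphere, where every curvature quantity entering the estimate degenerates to its most favourable value. First I would verify that the hypotheses of Theorem~\ref{Log-concavity-estimate-sphere} hold for $\Omega$: since $\textrm{II}(\partial\Omega) > \tfrac13 > 0$, the domain is uniformly convex with $C^2$ boundary; the sectional curvature of $\mathbb{S}^n$ is identically $1$, so we may take $\underline\kappa = \overline\kappa = \kappa = 1$; and the principal Dirichlet eigenfunction of a convex domain of $\mathbb{S}^n$ is log-concave by the work of Lee and Wang \cite{lee1987estimate}. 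Hence Theorem~\ref{Log-concavity-estimate-sphere} applies.

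Next I would feed this round-sphere data into the explicit constants $\alpha$, $C$, $d$ produced in the proof of Theorem~\ref{Log-concavity-estimate-sphere}. Because $\mathbb{S}^n$ is Einstein with parallel Ricci tensor we have $|\nabla\Ric| \equiv 0$, and combined with $\underline\kappa = \overline\kappa = 1$ this collapses the coefficient of $v$ to $C = 1$ and restricts the admissible $\alpha$ to the positive root of the quadratic governing the underlying Bochner-type differential inequality. A direct computation identifies that root: the positive solution of $\lambda\alpha^2 + 2(\lambda+2)\alpha - 8 = 0$ is $\alpha = \tfrac{1}{\lambda}\left(\sqrt{(\lambda+2)^2 + 8\lambda} - (\lambda+2)\right)$, which is well defined and positive since $\sqrt{(\lambda+2)^2 + 8\lambda} > \lambda+2$, and this is precisely the coefficient appearing in the statement.

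The step that requires genuine care — and which I would treat as the main obstacle — is showing that the additive constant $d$ in \eqref{Main inequality} vanishes under the boundary hypothesis $\textrm{II}(\partial\Omega) > \tfrac13$. In the proof of Theorem~\ref{Log-concavity-estimate-sphere}, $d$ enters to absorb the boundary contribution of $\nabla^2 v$ in the maximum-principle argument along $\partial\Omega$; I would revisit that estimate and check that with $\kappa = 1$, the normalization $\sup u = 1$, and the lower bound $\textrm{II} > \tfrac13$ on the second fundamental form, the boundary term is controlled with no additive slack, i.e. $d = 0$. Granting this, Theorem~\ref{Log-concavity-estimate-sphere} reads $\nabla^2 v + \left(\alpha|\nabla\sqrt u|^2 + v\right)g < 0$ with the $\alpha$ computed above, which is exactly the assertion of Corollary~\ref{Spherical version of Bochner theorem}.
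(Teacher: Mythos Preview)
Your overall strategy --- specialize Theorem~\ref{Log-concavity-estimate-sphere} with $\underline\kappa=\overline\kappa=1$, $|\nabla\Ric|=0$, $C=1$, and identify $\alpha$ as the positive root of $\lambda\alpha^2+2(\lambda+2)\alpha-8=0$ --- matches the paper. But you have reversed the roles of $d$ and of the hypothesis $\mathrm{II}>\tfrac13$, and this is a genuine gap.

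The constant $d$ has nothing to do with the boundary. In the proof of Lemma~\ref{Lemma to show barrier property} it enters only through the \emph{interior} non-gradient terms \eqref{Non-Gradient terms}; once $\underline\kappa=1$, $C=1$, $P_\kappa=0$ and $|\nabla\Ric|=0$, those terms are bounded below by $2d^2+2(n+1)d+\lambda$, which is already positive at $d=0$. So $d=0$ drops out for free from the interior barrier criteria, independently of any boundary information.

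Conversely, the assumption $\mathrm{II}(\partial\Omega)>\tfrac13$ is exactly what handles the boundary, and you never use it for that. The barrier $b=\alpha|\nabla\sqrt u|^2+v$ blows up near $\partial\Omega$, so Lemma~\ref{Lemma at boundary, second attempt} (equivalently \eqref{bound on alpha from boundary lemma}) forces the separate constraint
\[
\alpha \;<\; \inf_{X\in U\partial\Omega}\frac{4\,\mathrm{II}(X,X)}{|\nabla u|}.
\]
Using $\mathrm{II}>\tfrac13$ together with the gradient bound \eqref{Gradient bound from Ling} (so $|\nabla u|\le\sqrt\lambda$ on $\partial\Omega$) gives the right-hand side $\ge \tfrac{4}{3\sqrt\lambda}$. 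The missing step is then the concrete numerical check
\[
\frac{4}{3\sqrt\lambda}\;>\;\frac{1}{\lambda}\Bigl(\sqrt{(\lambda+2)^2+8\lambda}-(\lambda+2)\Bigr)\qquad\text{for all }\lambda\ge n,
\]
using that $\lambda\ge n$ on convex domains in $\mathbb S^n$. Only after this comparison are both constraints on $\alpha$ satisfied simultaneously; your proposal omits it entirely.
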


 An analogous result holds for convex domains in Euclidean space. 

\begin{cor} \label{Euclidean version of Bochner theorem}
     Suppose that $\Omega \subset \mathbb{R}^n$ is a bounded uniformly convex domain. Then the positive solutions $u$ of \eqref{eqn: Eigenfunction equation} satisfies the estimate
     \begin{equation*}
         \nabla^2 v +  \alpha|\nabla \sqrt u|^2+v- \sqrt{\tfrac{\lambda}{2}}<0,
     \end{equation*}
      where $\alpha$ satisfies
      \begin{equation}
        \alpha \leq \frac{1}{\lambda} \left(\sqrt{ \lambda^2 + 8 \lambda} - \lambda \right) \textrm{ and } \alpha < \inf_{X \in U\partial \Omega}  \frac{4 \textrm{II}(X,X)}{|\nabla u(x)|}.   \end{equation}
\end{cor}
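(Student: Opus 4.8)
The plan is to derive Corollary~\ref{Euclidean version of Bochner theorem} as the flat specialization of Theorem~\ref{Log-concavity-estimate-sphere}. First I would verify the hypotheses. Euclidean space has vanishing sectional curvature, so one may take $\overline\kappa = \kappa = \underline\kappa = 0$, and since $\Ric \equiv 0$ one also has $|\nabla \Ric| = 0$. The only remaining hypothesis of the theorem is that the ground state $u$ be log-concave, and this is precisely the Brascamp--Lieb theorem \cite{brascamp1976extensions}: on a bounded convex domain in $\mathbb{R}^n$ the first Dirichlet eigenfunction has concave logarithm. Hence Theorem~\ref{Log-concavity-estimate-sphere} applies and \eqref{Main inequality} holds with some explicit constants $\alpha$, $C$, $d$.

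The content of the corollary is then the evaluation of these constants in the flat case. Substituting $\overline\kappa = \underline\kappa = 0$ and $|\nabla\Ric| = 0$ into the formulas produced by the proof of Theorem~\ref{Log-concavity-estimate-sphere}, the coefficient of $v$ should collapse to $C = 1$, the additive constant to $d = \sqrt{\lambda/2}$, and the admissible values of $\alpha$ should be exactly those satisfying the quadratic inequality $\lambda\alpha^2 + 2\lambda\alpha - 8 \le 0$, i.e.\ $\alpha \le \tfrac{1}{\lambda}\big(\sqrt{\lambda^2 + 8\lambda} - \lambda\big)$. As an internal consistency check, the spherical Corollary~\ref{Spherical version of Bochner theorem} is recovered by replacing $\lambda$ with $\lambda + 2\overline\kappa$ inside the radicals, which is consistent with $\overline\kappa = 1$ on $\mathbb{S}^n$.

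Next I would track how the convexity of $\partial\Omega$ enters. In Theorem~\ref{Log-concavity-estimate-sphere} the constants depend on a lower bound for the second fundamental form; in the underlying maximum-principle argument this dependence is exactly the requirement that the boundary contribution of the test quantity have the correct sign, which holds precisely when $\alpha < \inf_{X \in U\partial\Omega} 4\,\textrm{II}(X,X)/|\nabla u(x)|$. Uniform convexity bounds $\textrm{II}$ below by a positive constant and standard elliptic estimates bound $|\nabla u|$ above, so this infimum is positive; choosing $\alpha$ below both stated bounds gives the corollary.

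The part that needs care, rather than being genuinely hard, is confirming that the curvature- and $\nabla\Ric$-dependent error terms in the proof of Theorem~\ref{Log-concavity-estimate-sphere} vanish identically in the flat case and do not leave a residual positive lower-order term spoiling the clean constants; I would check this by re-deriving the relevant Bochner-type identity directly on $\mathbb{R}^n$ using $\Delta v = -\lambda - |\nabla v|^2$ and verifying that every term the curvature bounds were invoked to control is identically zero here. One then also checks that the two constraints on $\alpha$ are simultaneously satisfiable, which follows from the lower bound on $\textrm{II}$ together with the gradient bound.
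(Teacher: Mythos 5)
Your proposal is correct and follows essentially the same route as the paper: specialize Theorem~\ref{Log-concavity-estimate-sphere} to $\underline\kappa=\overline\kappa=0$ and $\nabla\Ric\equiv 0$, invoke Brascamp--Lieb for the log-concavity hypothesis, then read off $C=1$ and $d=\sqrt{\lambda/2}$ from \eqref{condition on d} and the $\alpha$-bound from \eqref{Interior bound on alpha 3}, with the second constraint on $\alpha$ coming from the boundary Lemma~\ref{Lemma at boundary, second attempt}. The only detail worth making explicit (which you implicitly get right) is that vanishing $|\nabla\Ric|$ allows $\varepsilon\to 0$ so the constraint $C>2P_\kappa+\varepsilon$ degenerates to $C>0$, letting one take $C=1$ cleanly.
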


 It should be noted that this estimate is somewhat weaker compared to Corollary \ref{Spherical version of Bochner theorem} due to the curvature vanishing. In the proof of Theorem \ref{Log-concavity-estimate-sphere}, the positive curvature is used to control certain terms. Without these terms, it is necessary to include the additional $-\sqrt{\frac{\lambda}{2}}$.
 The main ingredient in the proofs of our theorems is a maximum principle. For this, we construct a one-parameter family of barriers $b(x,t)$ (indexed by time $t$), which depend smoothly on $t$, such that $b(x, 1)$ is the barrier of interest and $b(x, 0)$ is chosen such that $\nabla^2 \log u + b(\cdot , 0) <0$.  By means of the maximum principle, one shows that the estimate holds through all times $t.$ More precisely, we consider barrier functions $b(x,t)$ and derive a criteria which prevents the Hessian of log-eigenfunctions from ever \emph{touching} the barrier, see Section \ref{section: prelim} for more details.

\subsection{Remarks about these results}
One natural question is whether the estimate \eqref{Main inequality} is sharp. For smooth domains, the Hopf lemma implies that $\alpha|\nabla \sqrt u|^2$ tends to infinity at the rate of $O\left(u^{-1}\right)$ near the boundary. Thus \eqref{Main inequality} provides a lower bound for how quickly $\nabla^2 \log u$ tends to negative infinity near the boundary.  As shown in Lemma \ref{Lemma at boundary, second attempt} (cf. Lemma 4.2 \cite{andrews2011proof}), this estimate is sharp (up to a multiplicative factor) for $\nabla^2 \log u$.

Finally, we note that Theorem \ref{Log-concavity-estimate-sphere} can be extended to more general problems, in particular to the equation
\begin{equation*}
   \begin{cases}
       -\Delta u + V u = \lambda \rho u, \\
       u \vert_{\partial \Omega} = 0
   \end{cases}
\end{equation*}
 where $V$ is not too concave and $\rho$ is not too convex (see \cite{khan2024concavity, khan2024spectral} for more detail on this equation and its applications to conformal geometry).

\subsection*{Acknowledgements}

The authors would like to thank Julie Clutterbuck for informing us of the conjecture in \cite{ishige2020new}. 
G. Khan was supported in part by Simons Collaboration Grant 849022.
This work was supported by the National Science
Foundation under Grant No. DMS-1928930, while the third named author was in
residence at the Simons Laufer Mathematical Sciences Institute
(formerly MSRI) in Berkeley, California, during the Fall 2024 semester.

\section{Preliminaries}\label{section: prelim}

 For a Riemannian manifold $(M,g)$, we write the Riemannian curvature $(3,1)$ tensor of its Levi-Civita connection as \[
R(X,Y)Z = \nabla_X \nabla_Y Z -  \nabla_Y \nabla_X Z - \nabla_{[X,Y]} Z. \]
We let $\Ric$ denote the $(0,2)$ Ricci tensor and 
and define $R_X$ to be the $(1,1)$-tensor given by \[
R_X (Y) = R(Y,X) X. 
\] 
Similar to \cite{surfacepaper1,khan2024concavity}, we introduce the notion of the \emph{barrier operator,} which is crucial in our approach to establish concavity estimates.

\begin{definition} \label{Barrier operator definition}
    Given a function $b: \Omega \rightarrow \mathbb R$ and a unit vector $X$, the barrier operator $\mathcal{B}$ is the quantity
\begin{align} \label{Barrier operator definition equation}
    \mathcal{B}(b,X) :=& -2b^2 +2\langle \nabla b, \nabla v\rangle -2\textup{tr}\left( R_{X}\circ (\nabla v \otimes \nabla v + \nabla ^2 v)\right) -2b\Ric({X,X}) 
    \\
& \nonumber \quad -\nabla _{\nabla v}\Ric(X,X)+2\nabla _{X}\Ric(X, \nabla v)+\Delta b, 
\end{align}
where $\lambda$ denotes the principal eigenvalue of Equation \ref{eqn: Eigenfunction equation} and $ v= \log u$, the logarithm of the principal eigenfunction of \eqref{eqn: Eigenfunction equation}.  
\end{definition}

This operator appears naturally in a certain maximum principle computation. More precisely, suppose one has a unit vector $X_p $ such that \eqref{map} is maximized and equals $0,$ then one finds that $\Delta \left( \nabla ^2 v (X,X) + b \right)(p) = \mathcal B(b,X)(p). $ Since we assume this is a maximum, this expression has a sign. To arrive at a contradiction, one needs to verify that this expression is of the opposite sign  and so we define a condition known as the \emph{barrier criteria}.\footnote{This computation was done carefully in \cite{surfacepaper1, khan2024concavity}.}

\begin{definition} \label{Barrier criteria definition}
A barrier function $b$ satisfies the \emph{barrier criteria} if $\mathcal{B}(b,X) >0$
    whenever $X \in U \Omega$ is a unit vector\footnote{Here, $U\Omega$ is the unit tangent bundle of $\Omega$.} such that the mapping
\begin{align}
 U\Omega \rightarrow \mathbb R, \quad X_q\mapsto \nabla ^2v(X_q,X_q) +b(q)  \label{map}
\end{align} 
achieves a maximum at $X$ and satisfies $\nabla ^2 v (X_q,X_q) + b(q)=0$. 
\end{definition}

The following proposition can be summarized as follows: If a function satisfies the barrier criteria (and other technical properties), then the eigenvalues of $\nabla^2 v$ will never touch the barrier, and thus the corresponding upper bound on  $\nabla^2 v$ will be preserved along the continuity family. In this paper, we only consider continuity families that deform the barrier function. Nevertheless, we state a more general version, since the proof is exactly the same.

\begin{proposition}
\label{mainpropposition}
Let $M^n$ be a smooth manifold and consider a one-parameter family of eigenvalue problems
\begin{equation}\label{ean: parametric eigenvalue equation}
      \begin{cases}
       -\Delta_{g(t)} \varphi + \varphi = \lambda \varphi \\
       \varphi \vert_{\partial \Omega(t)} = 0.
   \end{cases}
\end{equation}
Here, the metric $g(t)$ and the domain $\Omega(t)$ are allowed to depend on $t$, so long as we assume that the domain $\Omega(t)$ is geodesically convex with respect to $g(t)$ for all $t$. Let $v (t)= \log u (t)$, (omitting the dependence on $x$ for notational purposes) and $u(t)$ is a positive eigenfunction of equation \eqref{ean: parametric eigenvalue equation} on $\Omega(t)$. 
Suppose there is a function $b:\Omega(t) \to \mathbb{R}$ which satisfies the following assumptions:
\begin{enumerate}
    \item $b$ depends smoothly on both $x$ and $t$.
    \item For all $t$, the barrier $b$ satisfies the growth rate condition
\begin{equation}\label{Growth rate assumption}
    \liminf_{x \to \partial{\Omega}, X \in U_x \Omega} \frac{- b(x,t)} { \nabla^2 v(t)(X,X)  } <1.
\end{equation}
\item At time $t=0$, \begin{equation} \label{Initial time assumption}
    \nabla^2 v(0) + b(0)\,  g(0) <  0.
\end{equation}
\item And finally, for all $0 \leq t \leq 1$, $b$
satisfies the barrier criteria.
\end{enumerate} 

Then the function $v(1)$ satisfies the concavity estimate 
\begin{equation} \nabla^2 v(1) + b(1)\,  g \le  0   \label{concave v} \end{equation} 
on the original domain $\Omega(1)$. 
\end{proposition}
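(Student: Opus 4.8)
The plan is to run a continuity/maximum-principle argument on the family of problems \eqref{ean: parametric eigenvalue equation}, tracking the largest eigenvalue of $\nabla^2 v(t) + b(t)\, g$ as $t$ increases from $0$ to $1$. Define
\[
  m(t) := \sup_{q \in \Omega(t)}\ \sup_{X \in U_q\Omega(t)}\ \Big(\nabla^2 v(t)(X,X) + b(q,t)\Big),
\]
so that assumption (3) says $m(0) < 0$ and the desired conclusion \eqref{concave v} is exactly $m(1) \le 0$. First I would establish that $m$ is continuous in $t$ (using the smooth dependence of $b$, and standard elliptic/continuity dependence of the eigenfunction $u(t)$ — hence of $v(t)$ — on $t$ and on the moving domain, noting that uniform convexity keeps the setup nondegenerate and $u(t) > 0$ in the interior with the Hopf lemma giving nonvanishing normal derivative at $\partial\Omega(t)$). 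Then it suffices to show $m(t)$ can never cross zero: I will argue that if $t_0$ is the first time $m(t_0) = 0$ then a contradiction arises.

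The key step is locating where and in which direction the supremum defining $m(t_0)$ is attained. The growth-rate condition \eqref{Growth rate assumption} is precisely what rules out the supremum escaping to $\partial\Omega(t_0)$: near the boundary $\nabla^2 v(X,X) \to -\infty$ like $O(u^{-1})$ along the normal direction, while $-b$ grows strictly slower, so $\nabla^2 v(X,X) + b < 0$ in a neighborhood of $\partial\Omega$; hence the supremum is attained at some interior point $p$ and some unit vector $X_p$, with $\nabla^2 v(t_0)(X_p, X_p) + b(p, t_0) = 0$. At such an interior spatial maximum, the first-order conditions in $x$ give $\nabla\big(\nabla^2 v(\cdot)(X,X) + b\big)(p) = 0$ (after extending $X_p$ appropriately, e.g. by parallel transport, so that the directional dependence contributes nothing to first order), and the second-order condition gives $\Delta\big(\nabla^2 v(\cdot)(X,X) + b\big)(p) \le 0$. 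On the other hand, the Bochner-type computation referenced after Definition \ref{Barrier operator definition} — differentiating the eigenfunction equation twice, commuting derivatives via the curvature tensor, and using $-\Delta v = \lambda + |\nabla v|^2$ — shows that at such a critical configuration this Laplacian equals exactly $\mathcal{B}(b, X_p)(p)$. Since $b$ satisfies the barrier criteria (assumption (4)), $\mathcal{B}(b, X_p)(p) > 0$, contradicting $\le 0$.

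To make the "first touching time" argument rigorous I would also need the time-derivative inequality: at $t_0$, $m(t_0) = 0$ with $m(t) < 0$ for $t < t_0$ forces $\frac{d}{dt}\big|_{t_0^-} m(t) \ge 0$, and one combines this with the spatial second-order information. The cleanest route is the standard parabolic-type trick of perturbing: for $\varepsilon > 0$ consider $\nabla^2 v(t)(X,X) + b(q,t) - \varepsilon t$ (or a similar perturbation making the first touching strict in $t$), obtain the strict inequality $\Delta(\cdots) > 0$ at the touching point from the barrier criteria with room to spare, contradict the spatial maximum, and then let $\varepsilon \to 0$; since the conclusion \eqref{concave v} is a closed condition this recovers $m(1) \le 0$. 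One should also record that because $\Omega(t_0)$ is geodesically convex, the interior maximum over the unit tangent bundle is genuinely an unconstrained max, so no boundary-of-fiber complications arise and the extension of $X_p$ to a local vector field with vanishing covariant derivatives at $p$ is legitimate.

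The main obstacle I anticipate is the boundary analysis feeding into the claim that the supremum is attained at an interior point: one must quantify that $-b(x,t)/\nabla^2 v(t)(X,X)$ has $\liminf < 1$ \emph{uniformly} enough near $\partial\Omega(t_0)$ to conclude $\nabla^2 v(X,X) + b < 0$ in a full one-sided neighborhood of the boundary, for all unit directions $X$ (not just the normal), and to handle the fact that $\nabla^2 v(X,X)$ blows up only in the normal direction while staying bounded tangentially. This requires the Hopf lemma lower bound on $|\nabla u|$ at $\partial\Omega$ (which uses uniform convexity and $C^2$ regularity of $\partial\Omega$, and is why the boundary second fundamental form enters the constants) together with an asymptotic expansion of $\nabla^2 \log u$ near $\partial\Omega$ of the type made precise in Lemma \ref{Lemma at boundary, second attempt}. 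Everything else is a careful but routine bookkeeping of the Bochner computation, which the excerpt attributes to \cite{surfacepaper1, khan2024concavity}.
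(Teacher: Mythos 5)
Your overall continuity/maximum-principle structure matches the paper's proof: rule out interior touching points via the barrier-operator identity $\Delta(\nabla^2 v(X,X) + b)(p) = \mathcal{B}(b,X)(p)$ at a maximizing unit vector (outsourced to \cite{surfacepaper1}) combined with assumption (4), and rule out the supremum escaping to $\partial\Omega$ via the growth-rate assumption, made quantitative by Lemma~\ref{Lemma at boundary, second attempt}. The initial-time strictness and the perturbation-in-$t$ trick complete the argument essentially as you describe, and your reading that the boundary analysis is the only genuinely new ingredient relative to \cite{surfacepaper1} is also how the paper presents it.

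There is, however, a real misconception in your account of the boundary asymptotics, and taken literally it would make the argument fail. You write that near $\partial\Omega$, ``$\nabla^2 v(X,X)$ blows up only in the normal direction while staying bounded tangentially,'' and earlier that the $O(u^{-1})$ blow-up is ``along the normal direction.'' Both are backwards. For $X \perp \nabla u$ one has $\nabla_X u = 0$, so $\nabla^2 v(X,X) = u^{-1}\nabla^2 u(X,X) \approx -u^{-1}\,\mathrm{II}(X,X)\,|\nabla u(x_0)|$, i.e.\ the \emph{tangential} Hessian of $v$ diverges at rate $O(u^{-1})$; along the gradient direction, the extra term $-(\nabla_\nu u)^2/u^2$ dominates and gives the much faster $O(u^{-2})$ divergence, which is the easy case. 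Because the admissible barrier growth (e.g.\ $\alpha|\nabla\sqrt u|^2 = \alpha|\nabla u|^2/(4u)$ in \eqref{The barrier involving w}) is exactly $O(u^{-1})$, the tangential directions are where $-b$ and $\nabla^2 v(X,X)$ race at the same rate, and it is precisely there that the sharp threshold $\limsup_{x\to x_0} b\,u < \big(\inf_X \mathrm{II}_{x_0}(X,X)\big)\,|\nabla u(x_0)|$ of \eqref{Assumption near x naught} is needed. If tangential Hessians of $v$ really stayed bounded while $b\to+\infty$, then $\nabla^2 v(X,X)+b \to +\infty$ near the boundary in tangential directions and the maximum could escape to $\partial\Omega$, contradicting your whole scheme. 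So Lemma~\ref{Lemma at boundary, second attempt} is not a routine uniformity-over-directions argument as your last paragraph suggests; it is a quantitative comparison of two competing $O(u^{-1})$ quantities in tangential directions, with the normal direction handled separately as the faster-decaying and thus harmless part.
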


The proof that there can be no interior points where the barrier touches the Hessian of $v$ is exactly the same as the proof in \cite{surfacepaper1}, and follows from the maximum principle. The main difficulty of that proof is to show that $\Delta \left(\nabla ^2 v(X,X) + b\right)(p) = \mathcal B(b,X)(p)$ at $X_p$ from Definition \ref{Barrier criteria definition}, we refer the reader to \cite{surfacepaper1} for a very detailed proof on this. However, there is one difference, that is we allow the barrier function to go to infinity near the boundary. For this reason, we must analyze the behavior at the boundary more carefully. We now prove a lemma about the behavior of $\nabla^2 v$ near the boundary of $\Omega$, which translates Equation \eqref{Growth rate assumption} into a quantitative version can be directly checked. We denote $u$ to be the positive solution of the problem \eqref{eqn: Eigenfunction equation} and $v = \log u.$

\begin{lemma} \label{Lemma at boundary, second attempt}
Suppose that $\partial \Omega$ is of class $C^2$ and $x_0 \in \partial \Omega$ is a point on the boundary where the domain is strictly convex. Whenever $b(x)$ satisfies the bound 
\begin{equation} \label{Assumption near x naught}
    \limsup_{x \to x_0} b(x) u (x) < \left( \inf_{X \in U \partial \Omega(x_0)} \textrm{II}_{x_0}(X,X) \right)| \nabla u(x_0)|,
\end{equation}
there is a small neighborhood around $x_0$ such that
\begin{equation}\label{Boundary inequality}
     \nabla^2 v + b g < 0.
\end{equation}
\end{lemma}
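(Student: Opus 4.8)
The plan is to analyze the asymptotic behavior of $v = \log u$ near the strictly convex boundary point $x_0$, reduce everything to leading-order expansions in the distance to the boundary, and then show that assumption \eqref{Assumption near x naught} forces the two-tensor $\nabla^2 v + b g$ to be strictly negative definite in a neighborhood of $x_0$. Let $d(x)$ denote the signed distance to $\partial\Omega$ (positive inside $\Omega$). Near $x_0$, the Hopf lemma gives $u(x) = |\nabla u(x_0)|\, d(x) + o(d(x))$, with $\nabla u(x_0) = -|\nabla u(x_0)|\,\nu$ where $\nu$ is the outward unit normal; more precisely, since $u$ solves the elliptic equation $-\Delta u = \lambda u$ with $C^2$ boundary, $u$ is $C^2$ up to the boundary and we can write $u = |\nabla u(x_0)| d + O(d^2)$ with controlled error. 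The key consequence is $v = \log u = \log d + \log|\nabla u(x_0)| + O(d)$, so the dominant term of $\nabla^2 v$ is $\nabla^2 \log d = \frac{\nabla^2 d}{d} - \frac{\nabla d \otimes \nabla d}{d^2}$.

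The first main step is to isolate which directions matter. In the normal direction $X = \nabla d = -\nu$, we have $\nabla^2(\log d)(\nu,\nu) = -1/d^2$, which blows up to $-\infty$; since $b(x)$ grows at most like $1/u \sim 1/(|\nabla u(x_0)| d)$ by \eqref{Assumption near x naught}, the term $\nabla^2 v(\nu,\nu) + b$ is dominated by $-1/d^2 \to -\infty$, so negativity in the normal direction is automatic. The delicate directions are the tangential ones $X \in U\partial\Omega(x_0)$ (and, by continuity, directions close to tangential), where $\nabla d \otimes \nabla d$ contributes nothing at leading order and the leading term is $\nabla^2(\log d)(X,X) = \frac{\nabla^2 d(X,X)}{d}$. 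Here one uses the standard fact that for $x$ on $\partial\Omega$, $-\nabla^2 d(X,X) = \mathrm{II}_{x_0}(X,X)$ (the second fundamental form with respect to the outward normal, sign convention so that convexity means $\mathrm{II} > 0$). Hence $\nabla^2 v(X,X) = -\frac{\mathrm{II}_{x_0}(X,X)}{d} + O(1)$, while $b(x) = \frac{b(x) u(x)}{u(x)} = \frac{b(x)u(x)}{|\nabla u(x_0)| d} + o(1/d)$. Thus
\[
\nabla^2 v(X,X) + b(x) = \frac{1}{d}\left( -\mathrm{II}_{x_0}(X,X) + \frac{b(x)u(x)}{|\nabla u(x_0)|} \right) + O(1),
\]
and assumption \eqref{Assumption near x naught}, namely $\limsup b u < (\inf_X \mathrm{II}_{x_0}(X,X))|\nabla u(x_0)|$, makes the bracket strictly negative, uniformly over tangential $X$, for $x$ close enough to $x_0$.

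The second step is a compactness/continuity argument over the unit sphere bundle to upgrade "negative on tangential and normal directions separately" into "negative definite as a quadratic form near $x_0$." One decomposes an arbitrary unit vector $X$ at a nearby point as $X = (\cos\theta)\,e_{\mathrm{tan}} + (\sin\theta)\,e_{\mathrm{nor}}$; the normal component contributes a term $\sim -\sin^2\theta/d^2$ and the tangential component contributes $\sim -(\text{positive})\cos^2\theta/d$, with cross terms of lower order, so for any fixed $d$ small the quadratic form $\nabla^2 v + bg$ is negative in every direction, with the only competition being the uniform (in $X$) negativity established above. Since $\partial\Omega$ is $C^2$ and strictly convex at $x_0$, $\inf_{X}\mathrm{II}_{x_0}(X,X) > 0$ and $\mathrm{II}$ varies continuously, so all the estimates are uniform on a small neighborhood, giving \eqref{Boundary inequality}.

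I expect the main obstacle to be making the error terms genuinely uniform and handling the "intermediate" directions rigorously: one must control $\nabla^2 v - \nabla^2\log d$ up to the boundary (which requires knowing $u/d$ extends $C^1$ up to the boundary, or at least that $\log(u/d)$ is Lipschitz near $x_0$ — this follows from elliptic boundary regularity since $u \in C^2(\overline\Omega)$ and $u/d > 0$ on $\partial\Omega$ by Hopf), and one must ensure that the transition between the "$1/d^2$ regime" (near-normal directions) and the "$1/d$ regime" (near-tangential directions) does not hide a bad direction. This is precisely the kind of computation carried out in Lemma 4.2 of \cite{andrews2011proof}, and the argument here is a Riemannian adaptation of it; the strict convexity at $x_0$ and the strict inequality in \eqref{Assumption near x naught} provide exactly the slack needed to absorb all lower-order and geometric error terms.
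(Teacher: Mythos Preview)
Your proposal is correct and follows essentially the same strategy as the paper's proof: both split into tangential and normal (gradient) directions, identify the leading $-\mathrm{II}/u$ behavior tangentially and the $-|\nabla u|^2/u^2$ behavior normally, and then absorb the mixed terms. The only substantive difference is packaging: the paper works directly with $\nabla^2 v = u^{-1}\nabla^2 u - u^{-2}\nabla u\otimes\nabla u$ and the projection onto $e=\nabla u/|\nabla u|$, and handles the cross terms by an explicit Young's inequality (which is precisely the ``intermediate directions'' obstacle you flag), whereas you organize the expansion around the distance function $d$; note that your claim that the cross terms are ``of lower order'' is slightly imprecise---they are $O(1/d)$ like the tangential piece---but they are indeed absorbable into the $-\sin^2\theta/d^2$ normal term via Young, exactly as the paper does.
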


This is a modified version of Lemma 4.2 of \cite{andrews2011proof} (or Lemma 3.4 in \cite{10.4310/jdg/1559786428}), and the proof is very similar. The main difference is that we incorporate a barrier function $b$ to find the optimal growth rate at each boundary point. 
\begin{proof}
We start by estimating the quantity
$\nabla^2 u(X, X)$ at $x_0$ for $X\in U\partial \Omega(x_0),$ 
 and then estimate it in a neighborhood. Since $x_0 \in \partial \Omega,$ we have that $\nabla u=-\left.|\nabla u| 
~ \nu\right|_{x_0},$  where $\nu$ is the outward normal vector. Thus, since $X\in T\partial \Omega(x_0)$ and thus $X \perp \nu,$ and therefore 
\begin{align*}
\nabla^2 u(X, X) & =\left\langle\nabla_{X} \nabla u, X\right\rangle=-\left\langle\nabla_{X}(|\nabla u| \nu), X\right\rangle \nonumber =-\mathrm{II}(X, X) |\nabla  u(x_0)|\leq -k |\nabla u(x_0)|, \label{tangential-direction}
    \end{align*}
 where II is the second fundamental form of $\partial \Omega$ at $x_0$ and $k = \inf _{X\in U\partial \Omega (x_0)}\mathrm{II}(X,X) >0,$ by assumption.

To estimate $\nabla ^2 u(X,X)$ for $X \in U\Omega$ in a neighborhood around $x_0$, we consider the gradient direction $e=\frac{\nabla u}{|\nabla u|}$, which will be smooth near $x_0$ and denote the projection $\pi^\perp: X \mapsto\langle X, e\rangle e$, along with its orthogonal projection $\pi=\operatorname{id}-\pi^\perp$.
 Furthermore, since the domain is $C^2,$ the Hopf lemma implies that $|\nabla u(x_0)| > 0$ (see \cite{gilbarg1977elliptic}, Lemma 3.4).  Moreover, elliptic regularity implies that $u\in C^2(\overline \Omega).$ Thus, for any $\delta >0$, there exists an $r_0>0$ so that for $x \in B_{r_0}\left(x_0\right) \cap  \Omega$,
\begin{eqnarray}
\left.\nabla^2 u\right|_x(\pi X, \pi X) & \leq&\left( -k |\nabla u(x_0)|  + \delta \right) |\pi X|^2 \text { for any } X \in T_x \Omega,  \label{piX-hessian}\\
|\nabla u(x)| & \geq&\frac{ \left|\nabla u\left(x_0\right)\right|}{2},  \label{first-derivative-estimate}\\
0<u(x) & \leq& 2\left|\nabla u\left(x_0\right)\right| d\left(x, x_0\right) \label{u leq dist}.
    \end{eqnarray}

In this neighborhood, for any tangent vector $X$, we have that, using \eqref{piX-hessian}
\begin{align*}
  \nabla^2 u(X, X) 
& =\nabla^2 u(\pi X, \pi X)+2 \nabla^2 u\left(\pi X, \pi^{\perp} X\right)+\nabla^2 u\left(\pi^{\perp} X, \pi^{\perp} X\right) \\
& \leq\left( -k  |\nabla u(x_0)|  +\delta \right)|\pi X|^2+2 |\nabla ^2 u|_\infty|\pi X|\left|\pi^{\perp} X\right|+|\nabla ^2 u|_\infty\left|\pi^{\perp} X\right|^2 \\
& \leq\left( -k |\nabla u(x_0)|  +2\delta\right)|\pi X|^2+\left(|\nabla ^2 u|_\infty+\frac{|\nabla ^2 u|_\infty^2}{\delta}\right)\left|\pi^{\perp} X\right|^2, 
\end{align*}
where we have used Young's inequality in the final line. We can then use this to estimate the Hessian of $v$
\begin{eqnarray*}
\left.\nabla^2 v \right|_x(X, X) & =&\frac{1}{u}\left(\nabla^2 u(X, X)-\frac{\left(\nabla_{X} u\right)^2}{u}\right) \\
& \leq& \frac{1}{u}\left[ \left(-k |\nabla u(x_0)|+2\delta \right)|\pi X|^2  + \left(|\nabla^2u|_\infty+\frac{ |\nabla^2u|_\infty^2}{\delta} -\frac{|\nabla u(x_0)|}{8 d(x,x_0)} \right) \left|\pi^{\perp} X\right|^2 \right],
\end{eqnarray*}
where we used  \eqref{first-derivative-estimate} and \eqref{u leq dist} for the last term. 

In order to finish the proof of the lemma, using $g(X,X) =  |\pi X|^2 + |\pi^{\perp} X|^2= 1,$ we get that 
\begin{eqnarray*}
\nabla^2 v(X, X) + b 
\leq \frac{1}{u}\left(  \begin{aligned}
\left(-k |\nabla u(x_0)|+2\delta + b u \right)|\pi X|^2 \\  + \left(|\nabla^2u|_\infty+\frac{ |\nabla^2u|_\infty^2}{\delta} +b u -\frac{|\nabla u(x_0)|}{8 d(x,x_0)} \right) \left|\pi^{\perp} X\right|^2 \end{aligned} \right),
\end{eqnarray*}
and show that this is negative. Using the strict inequality in Assumption \eqref{Assumption near x naught}, we can find a $ \delta$ small enough such that  
$\left(-k |\nabla u(x_0)|+2\delta + b u \right) < 0,$ for $x\in B_{r_0'}(x_0)\cap \Omega$ and $r_0'\leq r_0$ sufficiently small.

For this choice of $\delta>0$, and since $b u$ is bounded in a neighborhood of $x_0,$ we can choose $r_0'$ small enough such that 
\[|\nabla^2u|_\infty+\frac{|\nabla^2u|_\infty^2}{\delta} +b u -\frac{|\nabla u(x_0)|}{8 d(x,x_0)} < 0 \quad \textup{in }B_{r_0'}(x_0)\cap  \Omega. \] 
\end{proof}

\begin{remark}
  This lemma does not provide quantitative bounds away from the boundary. 
Nonetheless, it is useful because it provides the precise limit for the growth of a valid barrier function. In other words, so long as $b$ grows no faster than \eqref{Assumption near x naught}, we can apply Proposition \ref{mainpropposition} to establish log-concavity results. It should also be noted that this condition is sharp, in that if there is a single point $x_0$ where \eqref{Assumption near x naught} is reversed,
then it is possible to find unit vectors so that inequality \ref{Boundary inequality} is reversed.

\end{remark}
\subsection{A simple illustration}

Before proving the main result, let us provide a simple example to show how a priori estimates can be used to strengthen log-concavity estimates. The computation of the barrier criteria is much simpler compared to those later in the paper, but it gives some flavor of how these arguments work. 
\begin{proposition}\label{prop: zero barrier}
 Suppose that $\Omega \subset (M,g)$ is a convex domain in a Einstein manifold with sectional curvature $\overline \kappa \geq \kappa \geq \underline \kappa > 0$ and $u$ is 
 log-concave. Then $v = \log u$ is a strictly concave function (i.e., has non-degenerate Hessian) in the region where $u > \sqrt{1 - \frac{\underline \kappa}{\overline \kappa}}$.  
\end{proposition}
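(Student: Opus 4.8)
The plan is to verify the barrier criteria (Definition~\ref{Barrier criteria definition}) for the trivial barrier $b \equiv 0$ on the region $\{u > c\}$, where $c := \sqrt{1 - \underline\kappa/\overline\kappa}$, so that the interior maximum-principle argument of \cite{surfacepaper1} applies. Since $u$ is assumed log-concave, $\nabla^2 v \le 0$ on all of $\Omega$, so the assertion is that $\nabla^2 v$ is nowhere degenerate on $\{u > c\}$. Suppose to the contrary that $\nabla^2 v$ had a zero eigenvalue at some point $p$ with $u(p) > c$, with unit eigenvector $X$. Then the map $X_q \mapsto \nabla^2 v(X_q, X_q)$ would attain its maximum value $0$ at $(p,X)$, so by the computation recalled after Definition~\ref{Barrier operator definition} one has $\Delta\big(\nabla^2 v(X,X)\big)(p) = \mathcal{B}(0,X)(p)$, while interior maximality forces $\Delta\big(\nabla^2 v(X,X)\big)(p) \le 0$. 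Thus the proposition reduces to showing $\mathcal{B}(0,X)(p) > 0$ at every such touching point with $u(p) > c$.

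To evaluate the barrier operator: with $b \equiv 0$, Definition~\ref{Barrier operator definition} reduces to
\[
\mathcal{B}(0,X) = -2\operatorname{tr}\!\big(R_X \circ (\nabla v \otimes \nabla v + \nabla^2 v)\big) - \nabla_{\nabla v}\Ric(X,X) + 2\nabla_X\Ric(X,\nabla v),
\]
and since $M$ is Einstein, $\nabla\Ric = 0$ annihilates the last two terms. Picking an orthonormal frame $e_1 = X, e_2, \dots, e_n$ that diagonalizes $R_X$, with $R_X e_i = \kappa_i e_i$, one has $\kappa_1 = 0$ and $\kappa_i = K(X,e_i) \in [\underline\kappa, \overline\kappa]$ for $i \ge 2$, and (writing $v_i = \langle\nabla v, e_i\rangle$, $v_{ii} = \nabla^2 v(e_i, e_i)$) a short computation gives $\mathcal{B}(0,X) = -2\sum_{i=2}^n \kappa_i(v_i^2 + v_{ii})$. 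At a touching point $v_{11} = 0$ is the top eigenvalue of $\nabla^2 v$, so $v_{ii} \le 0$ for all $i$; combining this with the bounds on $\kappa_i$, the identity $\sum_i v_{ii} = \Delta v = -\lambda - |\nabla v|^2$ (from $-\Delta u = \lambda u$), and $\sum_{i \ge 2} v_i^2 \le |\nabla v|^2$, yields
\[
\mathcal{B}(0,X) = -2\sum_{i=2}^n \kappa_i v_i^2 - 2\sum_{i=2}^n \kappa_i v_{ii} \;\ge\; -2\overline\kappa\,|\nabla v|^2 - 2\underline\kappa\,\Delta v \;=\; 2\underline\kappa\lambda - 2(\overline\kappa - \underline\kappa)|\nabla v|^2.
\]

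Finally, I would use the gradient estimate of Lemma~2 of \cite{ling2006lower}, namely $|\nabla u|^2 \le \lambda(\beta^2 - u^2)$ for all $\beta > 1$; letting $\beta \downarrow 1$ gives $|\nabla v|^2 = |\nabla u|^2/u^2 \le \lambda(1-u^2)/u^2$, and substituting into the previous display,
\[
\mathcal{B}(0,X) \;\ge\; 2\underline\kappa\lambda - 2(\overline\kappa - \underline\kappa)\,\frac{\lambda(1-u^2)}{u^2} \;=\; \frac{2\lambda}{u^2}\Big(\overline\kappa\, u^2 - (\overline\kappa - \underline\kappa)\Big),
\]
which is strictly positive precisely when $u^2 > 1 - \underline\kappa/\overline\kappa$, i.e.\ when $u > c$. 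This contradicts $\mathcal{B}(0,X)(p) \le 0$, proving the proposition.

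The routine parts are the frame computation of $\mathcal{B}(0,X)$ and the elementary algebra with the curvature bounds and Ling's estimate. The one genuinely delicate point is the identity $\Delta\big(\nabla^2 v(X,X)\big)(p) = \mathcal{B}(0,X)(p)$ at a maximizing direction: the function $q \mapsto \max_X \nabla^2 v(X_q, X_q)$ is a priori only Lipschitz, so one must extend the maximizing vector $X$ carefully and track the resulting second-order terms — this is exactly the computation carried out in detail in \cite{surfacepaper1}, which I would cite rather than reprove.
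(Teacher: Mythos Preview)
Your proposal is correct and follows essentially the same approach as the paper: take $b\equiv 0$, use the Einstein condition to kill the $\nabla\Ric$ terms, diagonalize $R_X$, bound $\mathcal{B}(0,X)\ge 2\underline\kappa\lambda - 2(\overline\kappa-\underline\kappa)|\nabla v|^2$, and then invoke Ling's gradient estimate with $\beta\downarrow 1$. The only cosmetic difference is that you bound the $v_i^2$ and $v_{ii}$ sums separately while the paper splits via $\kappa_i=\underline\kappa+(\kappa_i-\underline\kappa)$, but both routes land on the identical inequality.
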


Note that we assume that the principal eigenfunction is log-concave, since it remains an open problem to show that this holds for general convex domains within positively curved Einstein manifolds (other than the sphere). 

\begin{proof}
Consider the barrier function $b \equiv 0.$ Given a unit vector $X$, such as in Definition \ref{Barrier criteria definition} extend this vector to an orthonormal basis $\{e_j\}$ of $T_p\Omega$ which diagonalizes $R_X = (Y \mapsto R(X,Y)X)$. Using these coordinates and the assumption $\nabla \Ric \equiv 0$, for $b \equiv 0$ \eqref{Barrier operator definition} simplifies to
\begin{eqnarray}
    \mathcal{B}(b,X)= -2\textup{tr}\left( R_{X}\circ (\nabla v \otimes \nabla v + \nabla ^2 v)\right) =  -2\sum_{i=2}^n \kappa_i(v_i^2+v_{ii}),
\end{eqnarray}
where $\kappa_i = \langle R(e_1, e_i, e_1), e_i\rangle$ are sectional curvatures and we write $v_ i = \langle \nabla v, e_i\rangle$ and $v_{ii} = \nabla ^2 v(e_i,e_i)$.  Furthermore, since $\Delta v + |\nabla v|^2 = -\lambda,$
 \begin{align}
    \mathcal{B}(b,X) = -2\sum_{i = 2}^n \kappa_i(v_i^2+v_{ii})
      & =-2\sum_{i = 2}^n \underline\kappa(v_i^2+v_{ii})  -2\sum_{i = 2}^n (\kappa_i-\underline\kappa)(v_i^2+v_{ii}) \nonumber\\
      & \geq  -2\underline{\kappa}{}( -\lambda - v_1^2 - v_{11})  -2\sum_{i = 2}^n (\kappa_i-\underline\kappa)v_i^2 \nonumber \\
      &  \geq  2\underline{\kappa}{} \lambda -2(\overline \kappa - 
      \underline \kappa)|\nabla v|^2, \label{ineq: Prop}
 \end{align} 
 where we used $ 0 = v_{11} \geq v_{ii}$ for all $i \geq 2,$ in the first inequality. Then the barrier criterion is satisfied if $\lambda > (\tfrac{\overline \kappa}{\underline \kappa} -1) |\nabla v|^2$.

Using Lemma 2 from \cite{ling2006lower}, we get that
\begin{equation} \label{Gradient bound from Ling}
    \frac{|\nabla u |^2}{\beta^2-u^2} \leq \lambda 
\end{equation}
where $\beta>1$ is an arbitrary constant. In other words, we have that
\begin{equation}
\left(\tfrac{\overline \kappa}{\underline \kappa} -1 \right) |\nabla v|^2  =   \left(\tfrac{\overline \kappa}{\underline \kappa} -1\right)\frac{ |\nabla u|^2}{u^2} \leq \left(\tfrac{\overline \kappa}{\underline \kappa} -1\right)\frac{\lambda (\beta^2-u^2)}{u^2}. 
    \end{equation}
From which we may infer that $\lambda > \left(\tfrac{\overline \kappa}{\underline \kappa} -1\right) |\nabla v|^2$  is satisfied whenever
    \begin{equation}\label{Lower bound on u}
\left(\tfrac{\overline \kappa}{\underline \kappa} -1\right)\frac{\lambda (\beta^2-u^2)}{u^2} < \lambda.        
    \end{equation}
Letting $\beta$ go to $1$ in order to optimize the inequality in \eqref{Lower bound on u}, we find that the inequality holds whenever $u^2 > \left(\tfrac{\overline \kappa}{\underline \kappa} -1\right)-\left(\tfrac{\overline \kappa}{\underline \kappa} -1\right)u^2,$ or equivalently, whenever $u > \sqrt{1 - \frac{\underline \kappa}{\overline \kappa}}.$
\end{proof}

In fact, we can also show the following.
\begin{obs} \label{Nearly round spheres log-concavity}
    Suppose that $\Omega \subset (M^n,g)$ is a convex domain which satisfies $\underline{\kappa} = 1$. If the principle eigenfunction is log-concave, it must be strongly log-concave whenever 
    \begin{equation} \label{Nearly round spheres gradient bound}
        |\nabla v| <\frac{1}{4 (\overline{\kappa}- \underline{\kappa})}  \left( \sqrt{9 |\nabla \Ric|^2 + 16 (\overline{\kappa}-\underline{\kappa}) \underline{\kappa} \lambda } - 3 |\nabla \Ric| \right).
    \end{equation}
\end{obs}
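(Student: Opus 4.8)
The plan is to repeat the $b \equiv 0$ computation from Proposition \ref{prop: zero barrier}, but without assuming $\nabla \Ric \equiv 0$. So I would begin by expanding $\mathcal{B}(0,X)$ using Definition \ref{Barrier operator definition} at a point $p$ with a unit vector $X$ satisfying the conditions of Definition \ref{Barrier criteria definition}; as before, diagonalize $R_X$ with an orthonormal basis $\{e_j\}$ with $e_1 = X$, so that
\begin{equation*}
    \mathcal{B}(0,X) = -2\sum_{i=2}^n \kappa_i(v_i^2 + v_{ii}) - \nabla_{\nabla v}\Ric(X,X) + 2\nabla_X \Ric(X,\nabla v).
\end{equation*}
The first sum is bounded below exactly as in \eqref{ineq: Prop}, giving $-2\sum_{i=2}^n \kappa_i(v_i^2+v_{ii}) \geq 2\underline\kappa \lambda - 2(\overline\kappa - \underline\kappa)|\nabla v|^2$ using $v_{11} = 0 \geq v_{ii}$ and $\Delta v + |\nabla v|^2 = -\lambda$ (note $\underline\kappa = 1 > 0$ here). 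The two remaining curvature-derivative terms are each pointwise bounded in absolute value by $|\nabla \Ric|\,|\nabla v|$ (since $X$ is a unit vector), so together they contribute at least $-3|\nabla\Ric|\,|\nabla v|$. Hence
\begin{equation*}
    \mathcal{B}(0,X) \geq 2\underline\kappa\lambda - 2(\overline\kappa-\underline\kappa)|\nabla v|^2 - 3|\nabla\Ric|\,|\nabla v|.
\end{equation*}

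Next I would observe that the barrier criterion $\mathcal{B}(0,X) > 0$ is implied by the quadratic inequality $2(\overline\kappa - \underline\kappa)|\nabla v|^2 + 3|\nabla\Ric|\,|\nabla v| - 2\underline\kappa\lambda < 0$ in the variable $s = |\nabla v| \geq 0$. Solving this quadratic (the leading coefficient $2(\overline\kappa - \underline\kappa)$ is positive — if it vanishes we are in the Einstein case already covered, and if the whole second-order term vanishes one gets an even simpler linear bound), the relevant root is
\begin{equation*}
    s < \frac{-3|\nabla\Ric| + \sqrt{9|\nabla\Ric|^2 + 16(\overline\kappa - \underline\kappa)\underline\kappa\lambda}}{4(\overline\kappa - \underline\kappa)},
\end{equation*}
which is exactly \eqref{Nearly round spheres gradient bound} after substituting $\underline\kappa = 1$ in the places where the statement has already done so. Thus under \eqref{Nearly round spheres gradient bound} the function $b \equiv 0$ satisfies the barrier criteria. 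One then checks the remaining hypotheses of Proposition \ref{mainpropposition} for the trivial barrier: $b \equiv 0$ is smooth in $x$ and $t$, the growth-rate condition \eqref{Growth rate assumption} holds vacuously (the numerator is $0$ and $\nabla^2 v(X,X) < 0$ by log-concavity, so the lim inf is $0 < 1$; alternatively invoke Lemma \ref{Lemma at boundary, second attempt}), and the initial-time condition holds since $v$ is assumed log-concave (here no continuity family in $t$ is even needed, or one takes the constant family). Therefore $\nabla^2 v < 0$ strictly, i.e. $v$ is strongly log-concave, wherever \eqref{Nearly round spheres gradient bound} holds.

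The only genuinely delicate point is the bound on the two covariant-derivative-of-Ricci terms $-\nabla_{\nabla v}\Ric(X,X) + 2\nabla_X\Ric(X,\nabla v)$: one must be careful that $|\nabla\Ric|$ denotes the operator/tensor norm in a convention for which $|\nabla_Z\Ric(Y,W)| \leq |\nabla\Ric|\,|Z|\,|Y|\,|W|$, so that with $X$ a unit vector each term is $\leq |\nabla\Ric|\,|\nabla v|$ and the combination is $\leq 3|\nabla\Ric|\,|\nabla v|$; the factor $3$ in \eqref{Nearly round spheres gradient bound} pins down this convention. Everything else is the same quadratic-formula bookkeeping as in Corollaries \ref{Spherical version of Bochner theorem} and \ref{Euclidean version of Bochner theorem}. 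I would also remark that, unlike Proposition \ref{prop: zero barrier}, one cannot immediately convert \eqref{Nearly round spheres gradient bound} into a clean sublevel-set statement via the Ling gradient estimate \eqref{Gradient bound from Ling} because $|\nabla\Ric|$ need not be controlled by $\lambda$; but on any fixed manifold the region where \eqref{Nearly round spheres gradient bound} holds is a genuine neighborhood of the maximum of $u$, so the observation still has content.
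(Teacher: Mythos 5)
Your proof is correct and takes essentially the same approach as the paper: expand $\mathcal{B}(0,X)$, reuse the lower bound on the trace term from Proposition \ref{prop: zero barrier}, bound the two $\nabla\Ric$ terms by $3|\nabla\Ric|\,|\nabla v|$, and solve the resulting quadratic in $|\nabla v|$. The paper's version is just a one-line display with these same ingredients; your added checks of the hypotheses of Proposition \ref{mainpropposition} and the closing remark about Ling's estimate are consistent elaborations rather than a different route.
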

\begin{proof}
    Similar to the computation above, for $b\equiv 0$ we find that
    \begin{eqnarray*}
        \mathcal B(b,X) &=& -2\textup{tr}\left( R_{X}\circ (\nabla v \otimes \nabla v + \nabla ^2 v)\right) -\nabla _{\nabla v}\Ric(X,X)+2\nabla _{X}\Ric(X, \nabla v) \\
        & > &  2\underline{\kappa}{} \lambda -2(\overline \kappa - 
      \underline \kappa)|\nabla v|^2 - 3 |\nabla \Ric| | \nabla v|,
    \end{eqnarray*}
    which is positive whenever \eqref{Nearly round spheres gradient bound} holds.
\end{proof}
Therefore, if the curvature pinching and derivative of the Ricci tensor go to zero, the region where weak log-concavity implies strong log-concavity becomes larger.

\section{Proof of Theorem \ref{Log-concavity-estimate-sphere}}

\label{Bochner Log-concavity estimates section}

We now prove the main result, and to do so we construct a family of functions which satisfy the barrier condition and tend to infinity near the boundary.

 \begin{lemma} \label{Lemma to show barrier property}
    Under the assumption of Theorem \ref{Log-concavity-estimate-sphere}, 
for $\alpha$ sufficiently small, $C$  and $d \geq 0$,  specified in the proof
    \begin{equation} \label{The barrier involving w}
     b(x,t) = \alpha |\nabla \sqrt{u}|^2 +Cv-d.
    \end{equation}
    satisfies the barrier criteria on the set where $b(x,t)>0$.
    \end{lemma}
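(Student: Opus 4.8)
The plan is to check Definition~\ref{Barrier criteria definition} directly for $b = \alpha|\nabla\sqrt u|^2 + Cv - d$ (the $t$-dependence enters only through the choice of constant, so I suppress it). Fix $p \in \Omega$ and a unit vector $X = e_1 \in U_p\Omega$ realising the maximum of $X_q \mapsto \nabla^2 v(X_q,X_q) + b(q)$ over $U\Omega$, with $\nabla^2 v(e_1,e_1)(p) + b(p) = 0$ and $b(p) > 0$; the goal is $\mathcal B(b,e_1)(p) > 0$. First I record what being at such a configuration gives: $e_1$ maximises $Y\mapsto\nabla^2 v(Y,Y)$ on the unit sphere of $T_p\Omega$, so it is a top eigenvector, $\nabla^2 v(e_1,e_1)(p) = -b(p) < 0$, and $\nabla^2 v(Y,Y)(p) \le -b(p)$ for every unit $Y$ — in particular $\nabla^2 v(\nabla v,\nabla v)(p) \le -b(p)\,|\nabla v(p)|^2$. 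The second step is the one that makes the estimate tractable: I confine $p$ to a compact subset of $\Omega$. Since $\lim_{x\to x_0} b(x)u(x) = \tfrac\alpha4 |\nabla u(x_0)|^2$ for each $x_0 \in \partial\Omega$, taking $\alpha$ below $\inf_{x_0}4\bigl(\inf_{X\in U\partial\Omega(x_0)}\textrm{II}_{x_0}(X,X)\bigr)\big/|\nabla u(x_0)|$ makes the hypothesis of Lemma~\ref{Lemma at boundary, second attempt} hold at every boundary point, so $\nabla^2 v + bg < 0$ in a neighbourhood of $\partial\Omega$ that does not shrink as $\alpha \to 0$; hence no value-$0$ maximum can lie there and $p \in \{u \ge \varepsilon\}$ for some $\varepsilon$ independent of $\alpha$ (for $\alpha$ small). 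On $\{u\ge\varepsilon\}$ all of $b$, $|\nabla v|$, $|\nabla^2 v|$, $|\nabla\Ric|$ and the curvature are bounded, and $0 < b(p) \le \alpha|\nabla\sqrt u(p)|^2 = O(\alpha)$ because $Cv - d \le 0$.

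Next I expand $\mathcal B(b,e_1)(p)$ using \eqref{Barrier operator definition equation}. The curvature and Ricci terms are treated as in the proof of Proposition~\ref{prop: zero barrier}: diagonalising $R_{e_1}$ on $e_1^\perp$ and using $\Delta v + |\nabla v|^2 = -\lambda$ with $\nabla^2 v(e_1,e_1) = -b$, $\nabla^2 v(e_i,e_i) \le -b$ yields
\[
 -2\,\textup{tr}\bigl(R_{e_1}\circ(\nabla v\otimes\nabla v + \nabla^2 v)\bigr) \ \ge\ 2\underline\kappa\lambda + 2\underline\kappa\,v_1^2 - 2\underline\kappa\,b - 2(\overline\kappa - \underline\kappa)|\nabla v|^2,
\]
while $-2b\,\Ric(e_1,e_1) \ge -2(n-1)\overline\kappa\,b$ and $-\nabla_{\nabla v}\Ric(e_1,e_1) + 2\nabla_{e_1}\Ric(e_1,\nabla v) \ge -3|\nabla\Ric|\,|\nabla v|$. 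The new ingredient is the analytic block $-2b^2 + 2\langle\nabla b,\nabla v\rangle + \Delta b$. With $\phi := |\nabla\sqrt u|^2$ and $w := \sqrt u$ (so $\Delta w = -\tfrac\lambda2 w - \phi/w$), the $Cv - d$ part of $b$ contributes $2C|\nabla v|^2 + C\Delta v = C|\nabla v|^2 - C\lambda$, while the $\alpha\phi$ part is computed from Bochner's formula for $w$: this expresses $\Delta\phi$ and $\langle\nabla\phi,\nabla v\rangle = \tfrac2w\nabla^2 w(\nabla w,\nabla w)$ through $|\nabla^2 w|^2$, $\nabla^2 w(\nabla w,\nabla w)$, $\Ric(\nabla w,\nabla w)$, $\phi$ and $w$, and the identity $\nabla^2 w = \tfrac w2\nabla^2 v + \tfrac w4\,\nabla v\otimes\nabla v$ rewrites everything in $\nabla^2 v$, $\nabla v$, $u$. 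After completing the square one finds
\[
 \alpha\bigl(\Delta\phi + 2\langle\nabla\phi,\nabla v\rangle\bigr) = \tfrac{\alpha u}{2}\Bigl(\tfrac{\nabla^2 v(\nabla v,\nabla v)}{|\nabla v|^2} + |\nabla v|^2\Bigr)^{2} - \alpha\lambda\phi + \tfrac{\alpha u}{2}\Ric(\nabla v,\nabla v) \ \ge\ -\,O(\alpha)
\]
on $\{u\ge\varepsilon\}$; since $-2b^2 = O(\alpha)$ as well, the whole $\alpha$-dependent part of $\mathcal B$ is $O(\alpha)$. (This is also where the delicate point hides: near $\partial\Omega$ the leading $O(\mathrm{dist}^{-3})$ parts of $\alpha\Delta\phi$ and $2\alpha\langle\nabla\phi,\nabla v\rangle$ cancel and what survives is not large enough to beat $-2b^2$, so the reduction to $\{u\ge\varepsilon\}$ via Lemma~\ref{Lemma at boundary, second attempt} is genuinely needed, not merely convenient.)

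Collecting everything, on $\{u\ge\varepsilon\}$ each term carrying a factor $\alpha$ or an undifferentiated factor $b$ is $O(\alpha)$, so
\[
 \mathcal B(b,e_1)(p) \ \ge\ \bigl(C - 2(\overline\kappa - \underline\kappa)\bigr)|\nabla v|^2 + 2\underline\kappa\,v_1^2 - 3|\nabla\Ric|\,|\nabla v| + (2\underline\kappa - C)\lambda - O(\alpha),
\]
a quadratic in $|\nabla v|$. One chooses $C$ large enough that the leading coefficient is positive (using $v_1^2 \le |\nabla v|^2$ to handle $2\underline\kappa v_1^2$ when $\underline\kappa<0$). If the curvature permits $2(\overline\kappa-\underline\kappa) < C < 2\underline\kappa$ — e.g.\ on $\mathbb S^n$, where $\overline\kappa=\underline\kappa$ — the constant term $(2\underline\kappa-C)\lambda$ is positive and the quadratic is strictly positive once $9|\nabla\Ric|^2 < 4(C-2(\overline\kappa-\underline\kappa))(2\underline\kappa-C)\lambda$ (the same flavour of condition as in Observation~\ref{Nearly round spheres log-concavity}), and one takes $d=0$; otherwise one takes $d>0$, uses that $b(p)>0$ forces $\alpha\phi(p) > d$ and hence $|\nabla v(p)|^2 = 4\phi(p)/u(p) \ge 4d/\alpha$, so the leading quadratic term (a positive constant times $4d/\alpha$) beats the bounded constant term, the linear term and the $O(\alpha)$ error. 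In both cases $\mathcal B(b,e_1)(p)>0$ for $\alpha$ small; making this quantitative amounts to solving a quadratic in $\lambda\alpha$, which is how the explicit value of $\alpha$ in Corollaries~\ref{Spherical version of Bochner theorem} and~\ref{Euclidean version of Bochner theorem} arises, and it is the curvature-vanishing case that forces $d>0$ (the term $-\sqrt{\lambda/2}$ in Corollary~\ref{Euclidean version of Bochner theorem}). I expect the main obstacle to be the Bochner computation for $|\nabla\sqrt u|^2$ — carrying out the substitution for $\nabla^2 w$ and verifying that the result collapses into the displayed nonnegative square plus controlled remainders — together with keeping track of how small $\alpha$ must be relative to the other constants.
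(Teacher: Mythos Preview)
Your route is genuinely different from the paper's, and while it is internally coherent as a soft argument, it does not deliver what the lemma actually promises.

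The paper never invokes Lemma~\ref{Lemma at boundary, second attempt} inside the proof of Lemma~\ref{Lemma to show barrier property}. Instead it estimates $\mathcal B(b,e_1)$ globally on $\Omega$ and obtains explicit quadratic inequalities for $\alpha$, $C$, $d$ in terms of $\lambda$, $\underline\kappa$, $\overline\kappa$, $|\nabla\Ric|$ alone. Two algebraic moves make this possible. First, the Bochner block collapses to the full Hilbert--Schmidt square $2\alpha\,\bigl|\Hess w + \tfrac{1}{w}\nabla w\otimes\nabla w\bigr|^2 = \tfrac{\alpha u}{2}\,\bigl|\Hess v + \nabla v\otimes\nabla v\bigr|^2$; your displayed identity with the scalar $\bigl(\tfrac{\nabla^2 v(\nabla v,\nabla v)}{|\nabla v|^2} + |\nabla v|^2\bigr)^2$ is not an equality (it is only the square of one diagonal entry). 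Second --- and this is the step you are missing --- the paper does \emph{not} discard $-2b^2$ as $O(\alpha^2)$. It expands $-2b^2 = -2\alpha^2|\nabla w|^4 - 2(Cv-d)\bigl(2\alpha|\nabla w|^2 + Cv-d\bigr)$ and uses the touching condition $b\ge 0$ (i.e.\ $\alpha|\nabla w|^2 \ge -(Cv-d)$) to extract the \emph{positive} contribution $+2(Cv-d)^2$. That term, combined with the gradient bound $|\nabla u|^2 \le \lambda(1-u^2)$, is exactly what allows the paper to separate ``gradient'' from ``non-gradient'' terms and solve for explicit $\alpha$ and $d$.

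Your compactness argument is valid for showing \emph{some} small $\alpha$ works, but the threshold you obtain depends on $\varepsilon$ and hence (through the proof of Lemma~\ref{Lemma at boundary, second attempt}) on $|\nabla^2 u|_\infty$, which is not among the geometric data listed in Theorem~\ref{Log-concavity-estimate-sphere}. In your $d>0$ branch the conclusion is in fact vacuous for small $\alpha$ (since $|\nabla v|^2 > 4d/\alpha$ is incompatible with the bound on the compact set), so any $d>0$ ``works'' --- but then the resulting estimate $\nabla^2 v + \alpha|\nabla\sqrt u|^2 + Cv - d < 0$ carries no quantitative information, and the explicit values of $\alpha$ in Corollaries~\ref{Spherical version of Bochner theorem} and~\ref{Euclidean version of Bochner theorem} are out of reach. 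Your closing remark that quantifying this ``amounts to solving a quadratic in $\lambda\alpha$'' is correct, but carrying that out requires precisely the two refinements above, at which point the compactness step becomes unnecessary.
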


\begin{proof}
Recall that $v = \log u$ and denote $w = \sqrt{u}$, and observe that in $\Omega$
\begin{eqnarray}
\label{Delta-w}
    \Delta w + \frac{ |\nabla w|^2}{w}=  - \frac{1}{2} \lambda w \quad \textup{and }\quad
     \Delta v +|\nabla v|^2 = -\lambda. 
\end{eqnarray}
We let $X_p\in T_p\Omega$ be a unit vector so that $\nabla ^2v({X,X}) +b(p)=0$
and suppose that $X_p$ maximizes the Hessian of $v$ at $p$. We denote $e_1=X_p$, 
and extend this vector to an orthonormal basis $\{e_j\}$ of $T_p\Omega$ which diagonalizes $R_X = (Y \mapsto R(X,Y)X)$. Then the barrier equation can be written as follows:
\begin{eqnarray}\label{Barrier on non-Einstein manifold}
    \mathcal{B}(b,e_1)&= &-2b^2+2\langle \nabla b, \nabla v\rangle -2\sum_{i=2}^n \kappa_i(v_i^2+v_{ii})-2b\Ric_{11}+\Delta b \\
    & & -\sum_{j}v_j\Ric_{11,j}+2\sum_{j}v_j\Ric_{1j,1},\nonumber 
\end{eqnarray}
where $\kappa_i = \langle R(e_1, e_i, e_1), e_i\rangle$ are sectional curvatures (and eigenvalues of $R_X$) and we write $v_{ii} = \nabla ^2 v(e_i,e_i)$ as well as $v_i = \nabla v \cdot e_i$. We start by simplifying the curvature terms in \eqref{Barrier on non-Einstein manifold}. \begin{align}
      -2\sum_{i = 2}^n \kappa_i(v_i^2+v_{ii})
      & =-2\sum_{i = 2}^n \underline\kappa(v_i^2+v_{ii})  -2\sum_{i = 2}^n (\kappa_i-\underline\kappa)(v_i^2+v_{ii}) \nonumber\\
      & \geq  -2\underline{\kappa}{}( -\lambda - v_1^2 - v_{11})  -2\sum_{i = 2}^n (\kappa_i-\underline\kappa)(v_i^2-b) \nonumber \\
      &  \geq  2\underline{\kappa}{}( \lambda + v_1^2 - b)  + 2\left(\Ric_{11}-(n-1)\underline \kappa\right)  b-2P_\kappa |\nabla v|^2, \label{ineq: trace-term}
\end{align} 
where we used \eqref{Delta-w} in the first inequality and write 
$P_\kappa = \overline \kappa - \underline \kappa.$\footnote{In a space form, this term simplifies since $\Ric_{XX} \equiv (n-1)\underline \kappa$ and $P_\kappa(x) \equiv 0.$}. 
Moreover, for $\varepsilon > 0$ Cauchy-Schwarz and Young's inequality implies that
 \begin{equation} \label{Ricci derivative bound}
    -\sum_{j}v_j\Ric_{11,j}+2\sum_{j}v_j\Ric_{1j,1} \geq -3 |\nabla \Ric | _\infty | \nabla v|  \geq - \frac{9}{4\varepsilon} | \nabla \Ric |^2 -  \varepsilon |\nabla v|^2.
 \end{equation}
We thus have, incorporating \eqref{ineq: trace-term} and \eqref{Ricci derivative bound} into \eqref{Barrier on non-Einstein manifold}\begin{align}
\nonumber \mathcal{B}(b, e_1)
\geq &  -2b^2 +  2\langle \nabla v, \nabla b\rangle  +2\underline{\kappa}{}\lambda   + 2\underline{\kappa} v_1^2 +2\left(\Ric_{11}-(n+1)\underline \kappa\right)  b-2P_\kappa |\nabla v|^2\\
&  -2 \Ric_{11}b + \Delta b - \frac{9}{4\varepsilon} | \nabla \Ric |^2 -  \varepsilon |\nabla v|^2\nonumber\\
=&  -2b^2 +  2\langle \nabla v, \nabla b\rangle + \Delta b +2\underline{\kappa}{} \lambda  +2\underline{\kappa} v_1^2-2(n+ 1)\underline \kappa b-(2P_\kappa +\varepsilon)|\nabla v|^2- \tfrac{9}{4\varepsilon} | \nabla \Ric |^2. \label{ineq: remaining terms to estimate}
\end{align}

Next, we estimate the terms $-b^2  + 2\langle \nabla b, \nabla v\rangle + \Delta b.$ To do so, Bochner's Formula shows that \begin{eqnarray}\label{Incorporating Bochner's formula}
    \Delta |\nabla w|^2 &= & 2 |\Hess \, w|^2 + 2\Ric (\nabla w,\nabla w)+2\langle \nabla w, \nabla \Delta w\rangle 
    \\
    &= & 2\sum_{i,j}w_{ij}^2+2\sum_{i,j}\Ric _{ij}w_iw_j +2\left(-\sum_{i,j} 2 \frac{w_{ij} w_i w_j}{w} + \frac{|\nabla w|^4}{w^2} - \frac{1}{2}  \lambda |\nabla w|^2\right). \nonumber
\end{eqnarray}
In addition, we have that
\begin{align} \label{Gradient inner product}
2\langle \nabla b , \nabla v \rangle = 2\left(\sum_{i,j} 4 \alpha \frac{w_{ij} w_i w_j}{w}  + {  C|\nabla v|^2}\right).
\end{align}
Combining \eqref{Delta-w}, \eqref{Gradient inner product} and \eqref{Incorporating Bochner's formula}, we find that
\begin{eqnarray} \label{Combined Laplacian and gradient inner product}
    \Delta b + 2\langle \nabla b, \nabla v\rangle &= & 2\alpha\sum_{i,j}w_{ij}^2+2\alpha\sum_{ij}\Ric _{ij}w_iw_j  \\
     & &+2\alpha\left(\sum_{i,j} 2 \frac{w_{ij} w_i w_j}{w} + \frac{|\nabla w|^4}{w^2} - \frac{1}{2}  \lambda |\nabla w|^2 \right) + C|\nabla v|^2-C\lambda\nonumber
\end{eqnarray}

Next, we observe that three of the terms in \eqref{Combined Laplacian and gradient inner product} combine to form a square:
\begin{align}\label{Completing the square}
    2 \alpha \sum_{i,j} w_{ij}^2 + 4 \alpha \sum_{i,j}  \frac{w_{ij} w_i w_j}{w}  +2\alpha  \frac{|\nabla w|^4}{w^2}  =2\alpha  \Bigl|\Hess w + \tfrac{1}{w}\nabla w \otimes \nabla w \Bigr|^2.
\end{align}
This is positive,\footnote{Using the Cauchy-Schwarz inequality, it is possible to show that this term exceeds $ \frac{2 \alpha}{n} \lambda^2 w^2$.} so dropping the left-hand side of \eqref{Completing the square} from \eqref{Combined Laplacian and gradient inner product}, we find that
    \begin{align}
\Delta b + 2\langle \nabla b, \nabla v\rangle >  & 2\alpha\sum_{ij}\Ric _{ij}w_iw_j -\alpha \lambda |\nabla w|^2 + C|\nabla v|^2 -C\lambda. \label{eqn: derivatives on b}
\end{align}

Using our choice of barrier \eqref{The barrier involving w}, we find that 
\begin{align}
        -2b^2 = -2(\alpha |\nabla w|^2 +v-d)^2 &= -2\alpha ^2|\nabla w|^4-2(Cv-d)\left(2\alpha |\nabla w|^2 +Cv-d\right) \nonumber \\
        &\geq -2\alpha ^2|\nabla w|^4 + 2 (Cv-d)^2,\label{ineq: square-term}
\end{align}
where we used the assumption that $v$ is log-concave and $v_{11}=-\alpha |\nabla w|^2 -Cv+d \leq 0 $ at $p$ to observe that $2\alpha |\nabla w|^2 \geq-2(Cv-d)$.

Incorporating \eqref{eqn: derivatives on b}, \eqref{ineq: square-term} into \eqref{ineq: remaining terms to estimate}, after rearranging the equation we find that
\begin{align} 
     \mathcal{B}(b,e_1)\geq &   -2\alpha^2 |\nabla w|^4  + 2 (Cv-d)^2   +2\alpha\sum\Ric _{ij}w_iw_j -\alpha \lambda |\nabla w|^2 + C|\nabla v|^2 -C\lambda \nonumber\\
 &\quad  +2\underline{\kappa}{} \lambda +2\underline{\kappa} v_1^2  -2(n+ 1)\underline \kappa b-(2P_\kappa +\varepsilon)|\nabla v|^2 - \tfrac{9}{4\varepsilon} | \nabla \Ric |^2 \nonumber\\
 = & -2\alpha^2 |\nabla w|^4  + 2 (Cv-d)^2   +2\alpha\sum\Ric _{ij}w_iw_j -\alpha \lambda |\nabla w|^2 +(C-2P_\kappa -\varepsilon)|\nabla v|^2 \nonumber \\
 &\quad  +(2\underline{\kappa}-C)\lambda +2\underline{\kappa} v_1^2  -2(n+ 1)\underline \kappa \left(\alpha |\nabla w|^2 + Cv -d\right)- \tfrac{9}{4\varepsilon} | \nabla \Ric |^2.  \label{ineq: last-step}
\end{align}
Finally, we estimate that 
\begin{equation} 
\label{Nearly Einstein identity}
   -2 (n+1) \underline{\kappa}(\alpha |\nabla w|^2 +Cv-d) +\sum_{i, j}2 \alpha \Ric_{ij} w_i w_j
     \geq  -2 (n + 1) \underline{\kappa}(Cv-d)-2\alpha \underline \kappa |\nabla w |^2.
\end{equation}

It is now necessary to consider the case where $\underline{\kappa} \geq 0$ and $\underline{\kappa} <0$ separately. When $\underline{\kappa} \geq 0$, we can drop the term $2\underline{\kappa} v_1^2$ from \eqref{ineq: last-step}. Then, using \eqref{Nearly Einstein identity} and grouping the this expression into gradient terms (i.e., those containing either $|\nabla w|$ or $|\nabla v|$) and non-gradient terms, we obtain

\begin{align} \label{Barrier after grouping}
     \mathcal{B}(b,e_1)& \geq  
 \underbrace{-2\alpha^2|\nabla w|^4 + \left(C-\varepsilon - 2 P_{\kappa} \right)|\nabla v|^2-\alpha ( \lambda +2\underline{\kappa}) |\nabla w|^2}_{\textrm{Gradient terms}} \\
 & + \underbrace{2 (Cv-d)^2  - 2(Cv-d)(n+1)\underline \kappa  + (2 \underline{\kappa}-C)  \lambda - \frac{9}{4\varepsilon} | \nabla \Ric |^2}_{\textrm{Non-gradient terms}}. \label{Non-Gradient terms}
\end{align}

From the fact that $\sup u =1$, the gradient terms may be estimated as
\begin{align}\nonumber
    &-2\alpha^2|\nabla w|^4 + \left(C-\varepsilon - 2 P_{\kappa} \right)|\nabla v|^2-\alpha ( \lambda +2\underline{\kappa}) |\nabla w|^2\\
    &\geq  \left(-\frac{\alpha^2}{2}|\nabla u|^2-\left( \lambda \nonumber+2\underline{\kappa}\right)\alpha+ 4\left(C-\varepsilon - 2 P_{\kappa} \right)\right)\frac{|\nabla u|^2}{4u^2}
    \\
    &\geq  \left(-\frac{\alpha^2}{2}\lambda-\left( \lambda +2\underline{\kappa}\right)\alpha+ 4\left(C-\varepsilon - 2 P_{\kappa} \right)\right)\frac{|\nabla u|^2}{4u^2},\label{alpha-ineq}
\end{align}
where we used \eqref{Gradient bound from Ling} in the last inequality.  Solving this resulting quadratic expression for $\alpha, $ we get that 
\begin{equation} \label{Interior bound on alpha 3}
    \alpha \leq  \frac{  \sqrt{  \left(\lambda+2 \underline{\kappa} \right)^2  +  8 \lambda (C-\varepsilon - 2 P_\kappa) } - (\lambda+2 \underline{\kappa}) }{\lambda},
\end{equation}
and thus $C > 2 P_\kappa + \varepsilon.$
To show that the non-gradient terms are positive, note that $Cv$ is less than $0$ and so we have that
\begin{align*}
    \nonumber
   & 2 (Cv-d)^2  - 2(Cv-d)(n+1)\underline \kappa  + (2 \underline{\kappa}-C)  \lambda - \tfrac{9}{4\varepsilon} | \nabla \Ric |^2\\
    \geq &   2 d^2 + 2(n+1)\underline \kappa d + (2 \underline{\kappa}-C)  \lambda - \tfrac{9}{4\varepsilon} | \nabla \Ric |^2.
\end{align*}
Solving the quadratic for $d$ (and recalling that $d$ is non-negative), we find that this is non-negative whenever
\begin{align}\label{condition on d}
    d \geq \max\left\{ \frac{1}{2} \left(-(n+1)\underline \kappa + \sqrt{\left( (n+1)\underline \kappa\right)^2 - (4 \underline{\kappa}-2C)  \lambda + \tfrac{9}{2\varepsilon} | \nabla \Ric |^2 } \right), 0\right\}.
\end{align}

In the case where $\underline{\kappa}<0,$ we use the inequality $v_1^2 < |\nabla v|^2$ to rewrite the gradient terms as
\[-2\alpha^2|\nabla w|^4 + \left(C-\varepsilon + 2\underline{\kappa} - 2 P_{\kappa} \right)|\nabla v|^2-\alpha ( \lambda +2\underline{\kappa}) |\nabla w|^2. \]

Doing so, we find the bound
\begin{equation} \label{Interior bound on alpha 4}
    \alpha \leq  \frac{  \sqrt{  \left(\lambda+2 \underline{\kappa}\right)^2  +  8 |\nabla u|^2_\infty (C-\varepsilon - 2 P_\kappa+ 2\underline \kappa) } - (\lambda+2 \underline{\kappa})}{| \nabla u |_\infty^2},
\end{equation}
    which forces $C > 2 P_\kappa - 2 \underline{\kappa} + \varepsilon.$

The non-gradient terms can then be estimated as follows:\footnote{It is possible to sharpen this estimate to get a smaller value for $d$.}
\begin{align}
& 2 (Cv-d)^2  - 2(Cv-d)(n+1)\underline \kappa  + (2 \underline{\kappa}-C)  \lambda - \tfrac{9}{4\varepsilon} | \nabla \Ric |^2\\
  \geq & d^2  + (2 \underline{\kappa}-C)  \lambda - \tfrac{9}{4\varepsilon} | \nabla \Ric |^2, \nonumber
\end{align}
which is non-negative whenever
\begin{align}\label{d in negative curvature}
    d \geq \sqrt{ (C - 2 \underline{\kappa})  \lambda + \tfrac{9}{4\varepsilon} | \nabla \Ric |^2}.
\end{align}

\end{proof}
\begin{remark}
    Creating the squared term in \eqref{Completing the square} is the most delicate part of the proof. In particular, this step does not go through if we set $w = u^p$ for $p> \frac{1}{2}$. Furthermore, for $p<\frac{1}{2}$, Lemma \ref{Lemma at boundary, second attempt} fails which forces the choice of $w = \sqrt{u}$.
\end{remark}

\subsection{The continuity family}

We are now able to prove Theorem \ref{Log-concavity-estimate-sphere} by constructing a continuity family.

\begin{proof}[Proof of Theorem \ref{Log-concavity-estimate-sphere}]
    In order to apply Lemma \ref{mainpropposition} with the barrier from Lemma \ref{Lemma to show barrier property}, we need to find a valid continuity family. To do so, we fix the domain $\Omega$ and ambient geometry and deform the barrier by
    \[ b(x,t) = t |\nabla w|^2 + Cv - d\] as $t$ varies from $0$ to $\alpha$. 

When $t=0$, Lemma \ref{Lemma at boundary, second attempt} shows that inequality \eqref{Boundary inequality} does not fail immediately after the initial time. Therefore, so long as $\alpha$ is sufficiently small (as determined by \eqref{Interior bound on alpha 3} and \eqref{Boundary inequality}), the previous lemma proves that the inequality holds throughout the domain. Note that independent of the choices of $C, \varepsilon$ in \eqref{Interior bound on alpha 3}, Lemma \ref{Lemma at boundary, second attempt} forces \begin{equation}\label{bound on alpha from boundary lemma}
    \alpha \leq 4 \min_{X_0 \in U\partial \Omega}  \frac{ \textrm{II}_{x_0}(X_0,X_0) }{ |\nabla u(x_0)|}.
\end{equation} 
\end{proof}

\section{Applications of Theorem \ref{Log-concavity-estimate-sphere}}
\label{Constant curvature section}

We now derive Corollaries \ref{Spherical version of Bochner theorem} and \ref{Euclidean version of Bochner theorem}, respectively. Note that in these geometries, it is known that the principal eigenfunction on convex regions is log-concave (see \cite{brascamp1976extensions} for the Euclidean case and \cite{lee1987estimate} for the spherical case).

\begin{proof}[Proof of Corollary \ref{Spherical version of Bochner theorem}]
    Within Lemma \ref{Lemma to show barrier property}, it was necessary to take $\alpha$ small enough so \eqref{Interior bound on alpha 3} holds. For a sphere, $ P_{\kappa} \equiv0$, so we can take $\varepsilon =0$ and $\underline{\kappa} = 1$. As such, Equation \eqref{Interior bound on alpha 3} simplifies to the following: 
\begin{equation}\label{Bound on alpha on sphere}
    \alpha \leq \frac{1}{\lambda} \left( \sqrt{ (\lambda+2)^2 + 8 \lambda} - (\lambda+2) \right).
\end{equation}
We must also ensure that $\alpha$ is small enough so that \eqref{Assumption near x naught} holds. When all the eigenvalues of the second fundamental form of the boundary are at least $\frac{1}{3}$, \eqref{Gradient bound from Ling} implies that

\[ \frac{4 \textrm{II}(X,X)}{|\nabla  u(x)|} \geq \frac{4}{3 \sqrt{\lambda}},\]
so it suffices to take $\alpha < \frac{4}{3\sqrt{\lambda}}$. Note that for convex domains in $\mathbb{S}^n$, $\lambda$ is at least $n$ (i.e. the principal eigenvalue of a hemisphere). A computation shows that  $\frac{4}{3\sqrt{\lambda}} > \frac{1}{\lambda} \left( \sqrt{ (\lambda+2)^2 + 8 \lambda} - (2 + \lambda) \right)$ for all $\lambda>n,$ so we only must consider the first inequality. As a result, we can take $\alpha =  \frac{1}{\lambda} \left( \sqrt{ (\lambda+2)^2 + 8 \lambda} - (\lambda+2) \right)$. Finally, we can drop the assumption that the domain is $C^2$ by taking a sequence of smooth convex domains approximating $\Omega$ and observing that if the log-concavity estimate holds for all the domains in this sequence, it must hold in the limit as well.
\end{proof}

We now turn our attention to convex domains in Euclidean space. However, due to the vanishing curvature, the estimate given in Corollary \ref{Euclidean version of Bochner theorem} is slightly weaker.
 \begin{proof}[Proof of Corollary \ref{Euclidean version of Bochner theorem}]
 Since in this case $\underline \kappa = 0$. Equation \eqref{condition on d} simplifies to 
\begin{equation}
        d \geq \sqrt{\frac{C\lambda}{2}},
\end{equation}
as well as \eqref{Interior bound on alpha 3} becomes
\[\alpha \leq \frac{1}{\lambda} \left(\sqrt{ \lambda^2 + 8C \lambda} - \lambda \right), \]
where in both inequalities we can choose $C = 1.$
In view of Lemma \ref{Lemma at boundary, second attempt}, we must have that
\[\alpha <  \inf_{X \in U \partial \Omega} \frac{4 \textrm{II}(X,X)}{|\nabla u(x)|}.  \]

 \end{proof}

One natural question is what happens when $\Omega$ is not smooth. To answer this, it is instructive to consider the case when $\Omega \subset \mathbb{R}^2$ is a square. In this case, a straightforward computation shows that for any value of $\alpha$, the quantity $\alpha | \nabla \sqrt u  |^2 + v$ tends to $- \infty$ along certain sequences converging to each corner.\footnote{Due to the restriction imposed by the second fundamental form, Corollary \ref{Euclidean version of Bochner theorem} does not hold for the square for any $\alpha >0$. Nevertheless, this example is useful for understanding the behavior near the boundary.} In particular, the Hopf lemma does not hold near corners, so even though Corollary \ref{Euclidean version of Bochner theorem} does not require smoothness of the domain, the resulting estimate will be vacuous near corners.

 \subsection{Other geometries}

We now apply Theorem \ref{Log-concavity-estimate-sphere} in other spaces of non-constant sectional curvature. For these, the assumption that the first eigenfunction is log-concave becomes important, since this might be unknown. For example, for $C^3$ perturbations of the round sphere (for $n \geq 3$), we have the following. 
 \begin{theorem}
    Suppose $(M^n,g)$ is a Riemannian manifold satisfying $\underline{\kappa} = 1$, $P_\kappa<\frac{1}{4}$ and $|\nabla \Ric|< \frac{n+1}{9}$. Then for any uniformly convex $\Omega$, if $v = \log u$ is concave, one has that 
    \begin{align*}
        \nabla ^2 v + \alpha |\nabla \sqrt u|^2 + v <0.
    \end{align*}
    Here $\alpha$ depends on $\lambda , $ $P_\kappa,$ $ |\nabla \Ric|,$ and the second fundamental form of $\partial \Omega.$
\end{theorem}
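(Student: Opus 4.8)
The plan is to derive this as a direct special case of Theorem \ref{Log-concavity-estimate-sphere}, or rather of the machinery assembled for its proof in Lemma \ref{Lemma to show barrier property} and the continuity argument. The statement is exactly the $C=1$, $d=0$ instance of the main estimate \eqref{Main inequality}, restricted to the regime where the curvature pinching $P_\kappa = \overline\kappa - \underline\kappa$ and the Ricci derivative $|\nabla\Ric|$ are small enough that the non-gradient terms can be controlled without any constant shift $d$. So the first step is to specialize all the inequalities of Lemma \ref{Lemma to show barrier property} to $\underline\kappa = 1$ and check that, under the hypotheses $P_\kappa < \frac14$ and $|\nabla\Ric| < \frac{n+1}{9}$, the choice $d = 0$ is actually admissible.

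Concretely, I would set $C = 1$ and examine condition \eqref{condition on d} in the case $\underline\kappa = 1 > 0$: $d = 0$ is allowed precisely when
\begin{equation*}
    \left((n+1)\underline\kappa\right)^2 - (4\underline\kappa - 2C)\lambda + \tfrac{9}{2\varepsilon}|\nabla\Ric|^2 \le \left((n+1)\underline\kappa\right)^2,
\end{equation*}
i.e. when $\tfrac{9}{2\varepsilon}|\nabla\Ric|^2 \le (4\underline\kappa - 2C)\lambda = 2\lambda$. Since $\lambda \ge n$ for uniformly convex $\Omega$ with $\underline\kappa = 1$ (comparison with the hemisphere, as used in the proof of Corollary \ref{Spherical version of Bochner theorem}), and since $|\nabla\Ric| < \frac{n+1}{9}$, this holds for a suitable choice of $\varepsilon > 0$; one must simultaneously keep $\varepsilon$ small enough that the gradient-term bound \eqref{Interior bound on alpha 3} gives a positive $\alpha$, which requires $C > 2P_\kappa + \varepsilon$, i.e. $1 > 2P_\kappa + \varepsilon$ — available since $P_\kappa < \frac14$. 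Thus there is a range of $\varepsilon$ making both requirements compatible, and \eqref{Interior bound on alpha 3} then yields an explicit positive
\begin{equation*}
    \alpha \le \frac{\sqrt{(\lambda + 2)^2 + 8\lambda(1 - \varepsilon - 2P_\kappa)} - (\lambda + 2)}{\lambda},
\end{equation*}
depending on $\lambda$, $P_\kappa$ and $|\nabla\Ric|$.

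With the barrier $b(x,t) = \alpha|\nabla\sqrt u|^2 + v$ (so $d = 0$, $C = 1$) now known to satisfy the barrier criteria on $\{b > 0\}$, the second step is to run the continuity family exactly as in the proof of Theorem \ref{Log-concavity-estimate-sphere}: deform $b(x,t) = t|\nabla\sqrt u|^2 + v$ for $t \in [0, \alpha]$, invoke Lemma \ref{Lemma at boundary, second attempt} to handle the boundary behaviour — which forces the additional constraint $\alpha \le 4\min_{X_0 \in U\partial\Omega} \textrm{II}_{x_0}(X_0,X_0)/|\nabla u(x_0)|$ — and apply Proposition \ref{mainpropposition} to conclude that $\nabla^2 v + (\alpha|\nabla\sqrt u|^2 + v)g < 0$ on $\Omega$. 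Taking $\alpha$ to be the minimum of the interior bound and the boundary bound gives the stated dependence on $\lambda$, $P_\kappa$, $|\nabla\Ric|$ and $\textrm{II}(\partial\Omega)$.

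The main obstacle is purely bookkeeping rather than conceptual: one must verify that a single choice of the auxiliary parameter $\varepsilon$ simultaneously (i) makes the discriminant in \eqref{Interior bound on alpha 3} positive so that $\alpha > 0$, (ii) satisfies $1 > 2P_\kappa + \varepsilon$, and (iii) satisfies $\tfrac{9}{2\varepsilon}|\nabla\Ric|^2 \le 2\lambda$ so that $d = 0$ works — the last two pull $\varepsilon$ in opposite directions, and it is the quantitative thresholds $P_\kappa < \frac14$ and $|\nabla\Ric| < \frac{n+1}{9}$ (together with $\lambda \ge n$) that guarantee the feasible window for $\varepsilon$ is nonempty. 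Everything else is an immediate citation of the results already proved in Sections \ref{section: prelim} and \ref{Bochner Log-concavity estimates section}.
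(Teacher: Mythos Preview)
Your proposal is correct and follows essentially the same route as the paper: specialize the machinery of Theorem \ref{Log-concavity-estimate-sphere} with $C=1$ and choose the auxiliary parameter $\varepsilon$ so that $d=0$ is admissible in \eqref{condition on d}. The only difference is cosmetic: the paper simply fixes $\varepsilon = \tfrac12$ outright (which is exactly the value making the two constraints $1 > 2P_\kappa + \varepsilon$ and $\lambda > \tfrac{9}{4\varepsilon}|\nabla\Ric|^2$ line up with the stated thresholds $P_\kappa < \tfrac14$ and $|\nabla\Ric| < \tfrac{n+1}{9}$), whereas you argue more abstractly for a nonempty feasible window in $\varepsilon$.
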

\begin{proof}
    This follows from choosing $C =1 $ in Theorem \ref{Log-concavity-estimate-sphere} and choosing $\varepsilon = \tfrac{1}{2}$ in \eqref{Barrier after grouping} and \eqref{Non-Gradient terms}.
\end{proof}

Similarly, for positively curved Einstein manifolds, we have the following.

\begin{obs} \label{Complex projective space estimate}
    Let $\Omega$ be a convex domain in $(M,g),$ where $\overline \kappa \geq \kappa \geq \underline \kappa \geq  0$ and $\nabla \Ric \equiv 0$ (e.g. $g$ is Einstein). If $v= \log u$ is a concave function, it also satisfies the estimate
    \begin{equation*}
        \nabla^2 v + \alpha |\nabla \sqrt{u}|^2  + \left(2P_\kappa+1\right)v < \max\left\{\tfrac{-(n+1)\underline \kappa}{2} + \sqrt{\left( \tfrac{(n+1)\underline \kappa}{2}\right)^2 - \left( \underline{\kappa}-P_\kappa - \tfrac{1}{4}\right)  \lambda } , 0 \right\}.
    \end{equation*}
    where $\alpha$ depends on $\lambda,$ $ P_\kappa$ and the second fundamental form of $\partial \Omega$.\end{obs}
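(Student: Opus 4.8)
The plan is to derive Observation \ref{Complex projective space estimate} as a clean specialization of the machinery in Lemma \ref{Lemma to show barrier property} and the continuity argument in the proof of Theorem \ref{Log-concavity-estimate-sphere}, exploiting the two simplifying hypotheses $\nabla \Ric \equiv 0$ and $\underline{\kappa} \geq 0$. Since $\nabla \Ric \equiv 0$, every term involving $|\nabla \Ric|$ drops out, so I may take $\varepsilon = 0$ throughout; and since $\underline{\kappa} \geq 0$, I am in the first of the two cases treated in Lemma \ref{Lemma to show barrier property}, so the relevant bounds are \eqref{Interior bound on alpha 3} and \eqref{condition on d} rather than their negative-curvature counterparts.

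First I would set $C = 2P_\kappa + 1$, which is admissible because the first case of Lemma \ref{Lemma to show barrier property} only requires $C > 2P_\kappa + \varepsilon = 2P_\kappa$. With this choice and $\varepsilon = 0$, the bound \eqref{Interior bound on alpha 3} on $\alpha$ becomes $\alpha \leq \tfrac{1}{\lambda}\bigl(\sqrt{(\lambda + 2\underline{\kappa})^2 + 8\lambda(2P_\kappa + 1 - 2P_\kappa)} - (\lambda + 2\underline{\kappa})\bigr) = \tfrac{1}{\lambda}\bigl(\sqrt{(\lambda + 2\underline{\kappa})^2 + 8\lambda} - (\lambda + 2\underline{\kappa})\bigr)$, which is a fixed positive quantity depending only on $\lambda$ and $\underline{\kappa}$. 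Next I would plug $C = 2P_\kappa + 1$ and $\varepsilon = 0$ into \eqref{condition on d}: the term $\tfrac{9}{2\varepsilon}|\nabla \Ric|^2$ is now $0/0$ in form, but since $\nabla \Ric \equiv 0$ one simply omits it (more carefully, one reruns \eqref{Ricci derivative bound} with $\nabla \Ric \equiv 0$, so the $-\tfrac{9}{4\varepsilon}|\nabla \Ric|^2 - \varepsilon|\nabla v|^2$ contribution vanishes identically and no $\varepsilon$ is introduced). Then $-(4\underline{\kappa} - 2C)\lambda = -(4\underline{\kappa} - 2(2P_\kappa + 1))\lambda = -4(\underline{\kappa} - P_\kappa - \tfrac14)\lambda$, so \eqref{condition on d} reads
\begin{equation*}
    d \geq \max\left\{ \tfrac{-(n+1)\underline{\kappa}}{2} + \sqrt{\left(\tfrac{(n+1)\underline{\kappa}}{2}\right)^2 - \left(\underline{\kappa} - P_\kappa - \tfrac14\right)\lambda}\,,\; 0 \right\},
\end{equation*}
and I would take $d$ equal to this maximum. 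Since $b(x,t) = \alpha|\nabla\sqrt u|^2 + Cv - d$ and $\nabla^2 v(1) + b(1)\,g \leq 0$ from Proposition \ref{mainpropposition}, rearranging $\nabla^2 v + \alpha|\nabla\sqrt u|^2 + Cv < d$ with the above value of $d$ gives exactly the claimed inequality (strictness coming from the continuity argument, as in the proof of the main theorem).

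The remaining ingredient is to verify that the continuity family actually runs, i.e.\ that the boundary growth condition of Lemma \ref{Lemma at boundary, second attempt} is met; as in the proof of Theorem \ref{Log-concavity-estimate-sphere}, this forces the additional constraint $\alpha \leq 4\min_{X_0 \in U\partial\Omega}\textrm{II}_{x_0}(X_0,X_0)/|\nabla u(x_0)|$, which is why the statement says $\alpha$ also depends on the second fundamental form of $\partial\Omega$. So I would take $\alpha$ to be the minimum of the interior bound above and this boundary bound. I expect the only genuinely delicate point to be the bookkeeping around $\varepsilon$: one must be careful that setting $\varepsilon = 0$ is legitimate precisely because $\nabla \Ric \equiv 0$ kills the term that $\varepsilon$ was introduced to absorb in \eqref{Ricci derivative bound}, so one should present this as ``re-run \eqref{Ricci derivative bound}–\eqref{ineq: remaining terms to estimate} with $\nabla\Ric\equiv 0$ and $\varepsilon$ absent'' rather than literally substituting $\varepsilon = 0$ into formulas that contain $1/\varepsilon$. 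Everything else is a direct specialization, so the proof is short: cite Theorem \ref{Log-concavity-estimate-sphere} / Lemma \ref{Lemma to show barrier property}, record the choices $C = 2P_\kappa + 1$, $\varepsilon = 0$, $d$ as above, and note that \eqref{Interior bound on alpha 3} and \eqref{condition on d} then take the stated closed forms.
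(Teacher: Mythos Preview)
Your proposal is correct and follows essentially the same approach as the paper: the paper's proof simply records the choices $C = 2P_\kappa + 1$ and $\varepsilon = 0$, then notes that $\alpha$ must satisfy both \eqref{bound on alpha from boundary lemma} and \eqref{Interior bound on alpha 3}. Your write-up is more detailed (in particular, your care in explaining that $\varepsilon = 0$ is legitimate because $\nabla\Ric\equiv 0$ eliminates the term in \eqref{Ricci derivative bound} that $\varepsilon$ was introduced to absorb is a nice clarification), but the substance is identical.
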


\begin{proof}
This follows from choosing $ C = 2P_\kappa + 1$ and $\varepsilon = 0 $. Then, we must choose $\alpha$ to satisfy the inequalities \eqref{bound on alpha from boundary lemma} and \eqref{Interior bound on alpha 3}.
\end{proof}

We can obtain a similar estimate in hyperbolic space, although here it is necessary to assume that the eigenfunction is log-concave, since this can fail for convex regions \cite{bourni2022vanishing}.

\begin{obs} \label{Hyperbolic space estimate}
    Let $\Omega$ be a convex domain in $\mathbb{H}^n$. If $v= \log u$ is a concave function, it also satisfies the estimate
    \begin{equation}
        \nabla^2 v + \alpha |\nabla \sqrt{u}|^2  +3v < \frac{1}{2} \left( \sqrt{ 10 \lambda + (n+1)^2} - (n+1) \right).
    \end{equation}
    where $\alpha$ depends on $\lambda$ and the second fundamental form of $\partial \Omega$.\end{obs}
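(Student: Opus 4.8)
The plan is to specialize the master result, Theorem \ref{Log-concavity-estimate-sphere}, to the case $M = \mathbb{H}^n$, where all the curvature quantities appearing in the constants become completely explicit. In hyperbolic space the sectional curvature is constant equal to $-1$, so $\underline{\kappa} = \overline{\kappa} = -1$, hence $P_\kappa = \overline{\kappa} - \underline{\kappa} = 0$, and since $\mathbb{H}^n$ is a space form it is in particular Einstein, so $\nabla \Ric \equiv 0$, i.e. $|\nabla \Ric| = 0$. This is precisely the negative-curvature branch of the proof of Lemma \ref{Lemma to show barrier property}, so I would track the constants through equations \eqref{Interior bound on alpha 4} and \eqref{d in negative curvature} rather than the $\underline{\kappa}\geq 0$ branch.

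First I would fix the parameter $C$. Since $P_\kappa = 0$ and we want $\varepsilon = 0$ (there is no $|\nabla\Ric|^2$ term to absorb, so Cauchy–Schwarz with $\varepsilon$ is unnecessary), the constraint $C > 2P_\kappa - 2\underline{\kappa} + \varepsilon = 2$ from \eqref{Interior bound on alpha 4} forces $C > 2$; taking $C = 3$ is the natural choice and yields the coefficient $3v$ in the stated estimate. Next I would compute $d$ from \eqref{d in negative curvature}: with $\underline{\kappa} = -1$, $C = 3$, $\varepsilon = 0$ and $|\nabla\Ric| = 0$, that inequality reads $d \geq \sqrt{(C - 2\underline\kappa)\lambda} = \sqrt{5\lambda}$. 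However, the statement's right-hand side is $\tfrac12(\sqrt{10\lambda + (n+1)^2} - (n+1))$, which is the $\underline\kappa < 0$ analogue of the sharper bound \eqref{condition on d}; so in fact I would use the sharpened version of the non-gradient estimate (the footnote after \eqref{d in negative curvature} remarks this is possible), keeping the linear-in-$d$ term $-2(Cv - d)(n+1)\underline\kappa$ and solving the resulting quadratic $2d^2 + 2(n+1)\underline\kappa\, d + (2\underline\kappa - C)\lambda \geq 0$ with $\underline\kappa = -1$, $C = 3$: this is $2d^2 - 2(n+1)d - 5\lambda \geq 0$, whose positive root is $\tfrac12((n+1) + \sqrt{(n+1)^2 + 10\lambda})$. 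Since the barrier criterion only needs to hold on the set where $b > 0$ and $Cv < 0$ there, one checks (as in the $\underline\kappa \geq 0$ case) that this choice of $d$ makes the non-gradient terms non-negative, and the estimate $\nabla^2 v + b\, g < 0$ becomes $\nabla^2 v + \alpha|\nabla\sqrt u|^2 + 3v < d$ with $d$ as claimed.

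The constant $\alpha$ is then pinned down by two requirements: the interior bound \eqref{Interior bound on alpha 4}, which with $P_\kappa = 0$, $\underline\kappa = -1$, $C = 3$, $\varepsilon = 0$ gives an explicit upper bound in terms of $\lambda$ and $|\nabla u|_\infty^2$ (itself controlled by $\lambda$ via \eqref{Gradient bound from Ling}), and the boundary constraint \eqref{bound on alpha from boundary lemma}, $\alpha \leq 4\min_{X_0}\mathrm{II}_{x_0}(X_0,X_0)/|\nabla u(x_0)|$, which depends on the second fundamental form of $\partial\Omega$. Taking $\alpha$ below the minimum of these two quantities, Lemma \ref{Lemma to show barrier property} and Lemma \ref{Lemma at boundary, second attempt} let us run the continuity argument of Proposition \ref{mainpropposition} exactly as in the proof of Theorem \ref{Log-concavity-estimate-sphere}: deform $b(x,t) = t|\nabla\sqrt u|^2 + 3v - d$ from $t=0$ (where concavity of $v$ plus a slightly negative constant gives \eqref{Initial time assumption}) to $t = \alpha$. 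Finally, the assumption that $v = \log u$ is concave must be kept as a hypothesis — unlike the spherical and Euclidean corollaries — because log-concavity of the ground state genuinely fails for some convex domains in $\mathbb{H}^n$, as shown in \cite{bourni2022vanishing}; this is the one place where we cannot invoke an external theorem to discharge the hypothesis.

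The only real subtlety I anticipate is bookkeeping: making sure I am consistently in the $\underline\kappa < 0$ branch (equations \eqref{Interior bound on alpha 4}, \eqref{d in negative curvature}) and that the sharpened non-gradient estimate producing the $\sqrt{10\lambda + (n+1)^2}$ expression is carried out correctly, since the displayed $d$ in the statement does not match the cruder \eqref{d in negative curvature} but rather its linear-term-retaining refinement. Everything else is a direct substitution of $\underline\kappa = -1$, $P_\kappa = 0$, $|\nabla\Ric| = 0$, $C = 3$, $\varepsilon = 0$ into the machinery already assembled; there is no new analytic input.
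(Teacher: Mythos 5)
Your scaffolding is right: specialize to constant curvature $-1$, so $\underline\kappa = -1$, $P_\kappa = 0$, $\nabla\Ric \equiv 0$, take $C = 3$, choose $\alpha$ below both the interior bound and the boundary bound \eqref{bound on alpha from boundary lemma}, and run the continuity family as in the proof of Theorem~\ref{Log-concavity-estimate-sphere}. The choice $\varepsilon = 0$ is also sound (indeed preferable), since with $\nabla\Ric\equiv 0$ the Cauchy--Schwarz/Young step \eqref{Ricci derivative bound} is vacuous and taking $\varepsilon = 0$ leaves strict slack $C = 3 > 2 = 2P_\kappa - 2\underline\kappa$.

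However, there is a genuine gap in how you handle the non-gradient terms, and it conceals a discrepancy with the stated observation that you did not flag. You propose to ``keep the linear term'' and solve $2d^2 + 2(n+1)\underline\kappa\, d + (2\underline\kappa - C)\lambda \geq 0$, i.e.\ you import the $\underline\kappa \geq 0$ bound \eqref{condition on d}. That bound rests on the step $-2(Cv-d)(n+1)\underline\kappa \geq 2(n+1)\underline\kappa\, d$, which uses $Cv - d \leq -d$ together with $\underline\kappa \geq 0$; when $\underline\kappa < 0$ the inequality reverses, so this lower bound is simply invalid here. Writing $s = Cv - d \leq -d \leq 0$, the correct statement is that the non-gradient terms equal $2s^2 + 2(n+1)s - 5\lambda$, a parabola in $s$ with vertex at $s = -\tfrac{n+1}{2}$; its infimum over $s \leq -d$ is $-\tfrac{(n+1)^2}{2} - 5\lambda < 0$ when $d \leq \tfrac{n+1}{2}$, and $2d^2 - 2(n+1)d - 5\lambda$ when $d > \tfrac{n+1}{2}$. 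So non-negativity over the full admissible range of $s$ genuinely requires
\[
d \;\geq\; \tfrac{1}{2}\Bigl( (n+1) + \sqrt{(n+1)^2 + 10\lambda}\,\Bigr),
\]
which, as it happens, is exactly the quantity you computed as ``the positive root.''

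But you then assert this gives ``the estimate $\nabla^2 v + b\,g < 0$ \ldots with $d$ as claimed,'' and that is false: the observation claims $d = \tfrac{1}{2}\bigl(\sqrt{10\lambda + (n+1)^2} - (n+1)\bigr)$, i.e.\ the sign of the $(n+1)$ term is opposite to yours. Your value is strictly larger, and indeed appears to be the one the argument actually supports; the smaller constant in the statement is the positive root of $2d^2 + 2(n+1)d - 5\lambda = 0$, which one would obtain from the (invalid for $\underline\kappa < 0$) bound $-2(Cv-d)(n+1)\underline\kappa \geq -2(n+1)\underline\kappa\, d$. In short: your derivation already shows the stated constant is too small, but by not comparing your answer with the claim you both failed to spot a likely sign error in the Observation and left unaddressed that the step you cite from \eqref{condition on d} is not legitimate when $\underline\kappa < 0$. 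The fix is to carry out the minimization of the quadratic in $s = Cv - d$ over $s \leq -d$ directly, note the additional constraint $d > \tfrac{n+1}{2}$, and present the result with the $+(n+1)$ sign.
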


\subsection{Final Remarks} By choosing constants appropriately, it is possible to use this method to obtain bounds which are sharp near the boundary (up to lower order terms). In particular, if we take $C$ large enough so that the right-hand-side of \eqref{Interior bound on alpha 3} exceeds the bound of Lemma \ref{Lemma at boundary, second attempt}, one can take
\begin{equation} \label{Optimal alpha}
    \alpha = 4 \min_{X_0 \in U\partial \Omega}  \frac{ \textrm{II}_{x_0}(X_0,X_0) }{ |\nabla u(x_0)|},
\end{equation} 
which is the value of $\alpha$ which saturates \eqref{Assumption near x naught}.
For this choice of $C, \alpha$ (and corresponding $d$ in \eqref{condition on d}), we find that $\nabla ^2 v + \alpha |\nabla \sqrt u|^2 + Cv -d <0.$
 This provides an estimate whose growth is asymptotically sharp for points and vectors $(x,X) \in U\Omega$ which are close to the $(x_0,X_0) \in U \partial \Omega$ which achieves the minimum in \eqref{Optimal alpha}.

Finally, we would like to note that our results are related to a conjecture by Ishige, Salani and Takatsu \cite{ishige2020new}. They conjectured that for convex $\Omega \subset \mathbb R^n$ the $L^\infty$-normalized ground state eigenfunction is $\tfrac{1}{2}$-log-concave (that is, that $-\sqrt{-\log u}$ is concave). This is stronger than log-concavity and is equivalent to 
\begin{equation} \label{half log-concavity}
      \nabla^2 v - \frac{\nabla v \otimes \nabla v}{2 v} \leq 0
\end{equation}
for all unit vectors $v$ (recall that $v \leq 0$). Corollary \ref{Euclidean version of Bochner theorem} can be written as
    \begin{equation} \label{Euclidean inequality in terms of v}
      \nabla^2 v + \frac{\alpha}{4} \exp(v) |\nabla v|^2 + v \leq  \sqrt{\tfrac{\lambda}{2}}.
\end{equation}
For vectors $X \perp \nabla u$, the latter inequality is actually stronger because it includes the norm of the gradient rather than the norm of the directional derivative. However, this inequality is weaker in the gradient direction (and includes lower-order terms). Nevertheless, one can consider Corollary \ref{Euclidean version of Bochner theorem} as providing partial evidence toward this conjecture.

\bibliography{references}
\bibliographystyle{alpha}

\end{document}